\newtheorem{remark}{Remark}
\newtheorem{lemma}{Lemma}
\newtheorem{lemmas}{Lemma}[section]
\def\linearmaps{\mathcal{L}}
\def\invertible{\mathcal{GL}}
\def\uparam{u^{\mu}}
\def\Aparam{A^{\mu}}
\def\uparambar{u^{\bar{\mu}}}
\def\Aparamfix{A^{\bar{\mu}}}
\def\Aparamj{A^{\mu_j}}
\def\fparam{b^{\mu}}
\def\fparambar{b^{\bar{\mu}}}
\def\zparambar{z^{\bar{\mu}}}
\def\identity{\operatorname{Id}}
\def\preconditioner{\mathscr{P}}
\def\risk{\mathfrak{R}}
\def\probability{\varrho}
\def\hypospace{\mathcal{H}}
\def\argmin{\operatorname*{arg\,min}}
\def\abilinear{a}
\def\bbilinear{b}
\def\forcingBold{\bb_h^\mu}
\def\tildeforcingBold{\tilde{\bb}_h^\mu}
\def\bb{\mathbf{b}}
\def\ff{\mathbf{f}}
\def\uu{\mathbf{u}}
\def\vv{\mathbf{v}}
\def\zz{\mathbf{z}}
\def\xx{\mathbf{x}}
\newcolumntype{Y}{>{\centering\arraybackslash}X}
\newcommand{\nicola}[1]{\textcolor{black}{#1}}
\newcommand{\unitsphere}{\mathbb{S}^{N_h-1}}
\begin{document}

\title{{\color{black} Numerical Solution of Mixed-Dimensional PDEs \\Using a Neural Preconditioner}}

\author{Nunzio Dimola$^1$, Nicola Rares Franco$^1$, Paolo Zunino$^{1,}$\footnote{Corresponding author\\\emph{Email addresses}: \textsf{nunzio.dimola@polimi.it}, \textsf{nicolarares.franco@polimi.it}, \textsf{paolo.zunino@polimi.it}}}

\date{\small $^1$MOX, Department of Mathematics, Politecnico di Milano, P.zza Leonardo da Vinci 32, Milan, 20133, Italy}

\maketitle

\begin{abstract}
Mixed-dimensional partial differential equations (PDEs) are characterized by coupled operators defined on domains of varying dimensions and pose significant computational challenges due to their inherent ill-conditioning. Moreover, the computational workload increases considerably when attempting to accurately capture the behavior of the system under significant variations or uncertainties in the low-dimensional structures such as fractures, fibers, or vascular networks, due to the inevitable necessity of running multiple simulations. In this work, we present a novel preconditioning strategy that leverages neural networks and unsupervised operator learning to design an efficient preconditioner specifically tailored to a class of 3D-1D mixed-dimensional PDEs. The proposed approach is capable of generalizing to varying shapes of the 1D manifold without retraining, making it robust to changes in the 1D graph topology. 
Moreover, thanks to convolutional neural networks, the neural preconditioner can adapt over a range of increasing mesh resolutions of the discrete problem, enabling us to train it on low resolution problems and deploy it on higher resolutions.
Numerical experiments validate the effectiveness of the preconditioner in accelerating convergence in iterative solvers, demonstrating its appeal and limitations over traditional methods. This study lays the groundwork for applying neural network-based preconditioning techniques to a broader range of coupled multi-physics systems.
\end{abstract}



\noindent {\bf Keywords} mixed-dimensional PDEs, finite element approximation, scientific machine learning, preconditioning 





\section{Introduction}
Research on mixed-dimensional partial differential equations (PDEs) is advancing to address challenges in complex multiscale applications in fields such as geophysics, biomechanics, and neuroscience. These disciplines often involve thin structures with high aspect ratio, including structural mechanics and geomechanics \cite{Meier2018124,Lespagnol2024,Fumagalli2013454}, microcirculation and perfusion modeling \cite{Vidotto20191076,Possenti20213356}, axons in neural networks \cite{Buccino2021}, and environmental flows \cite{Hagmeyer2022}. Modeling these interactions with traditional 3D meshes would require massive computational resources, making direct simulations impractical. Mixed-dimensional PDEs, which enable lower-dimensional representations to be embedded within higher-dimensional domains, present a promising alternative.
However, the computational effort associated with these models rises significantly
when trying to capture the behavior of the system for varying configurations or interactions within the low-dimensional structures, the latter representing, e.g., fracture networks in geological formations, fiber distributions in structural materials, or vascular and neural networks in biological systems. Similarly, multiple simulations are needed when the structure of the low-dimensional problem is uncertain or unknown. In all such cases, despite the simplifications provided by mixed-dimensional PDEs, the overall computational demand remains high due to the cumulative cost of repeated individual simulations.
This growing need underscores the importance of developing efficient solvers and preconditioners capable of handling the repeated solutions essential to representing the statistical variability of intricate networks in real-world applications.

Today, the discretization of mixed-dimensional PDEs is established, providing computationally efficient schemes suitable for applications with intricate networks \cite{10.1007/s10231-020-01013-1,10.1051/m2an/2019027,Kuchta2021558}, with rigorous error estimates ensuring accuracy. The well-established literature on block and operator preconditioning techniques, see, e.g., \cite{https://doi.org/10.1002/nla.716}, has been instrumental in the development of computational solvers for such problems. \cite{Kuchta2016B962,KMM2} have paved the way for the efficient resolution of discrete mixed-dimensional problems. Along this direction, we also mention the work of \cite{budisa2024algebraic} on algebraic multigrid methods for metric-perturbed coupled problems, which has significantly advanced the understanding of block preconditioning techniques in mixed-dimensional PDEs. Parallel to this, other researchers have also explored approximate algorithms that can enhance scalability for large systems: see, e.g. \cite{FIRMBACH2024117256}. 

The goal of this study is to develop a novel preconditioning strategy that addresses the twofold challenges inherent in mixed-dimensional PDEs: the ill-conditioned behavior of these problems and the need for adaptability across diverse geometric configurations of the low-dimensional subproblems. We aim to create a preconditioner that not only mitigates the ill-conditioning of the system, but also maintains this improved conditioning across a variety of configurations for 1D or other low-dimensional structures. To accomplish this, the preconditioner is designed as a nonlinear operator rather than a conventional matrix, enabling it to process the entire solution manifold effectively. Our method takes advantage of the latest advances in the approximation capabilities of neural networks and learning approaches related to machine learning \cite{KOVACHKI2024419,BOULLE202483,Lu2021}. These advancements have sparked active research in the creation of novel solvers and preconditioners tailored for different types of problem \cite{Kutyniok,doi:10.1137/24M162861X,grimm2023learningsolutionoperatortwodimensional,MARGENBERG2022110983,Azulay2023S127}, this being a non-exhaustive list of examples supporting the potential of neural network-based preconditioners as a viable alternative for solving large, sparse linear systems across various scientific and engineering domains.

Building upon these studies, we develop a nonlinear preconditioner designed to efficiently manage the numerical challenges associated with the complex structure of mixed-dimensional equations. By incorporating a shape descriptor into the learning framework, the preconditioner can generalize across different configurations of the 1D graph or low-dimensional structures without the need for retraining. 
In order to demonstrate the efficacy of the proposed approach, we focus our attention on a mixed-dimensional 3D-1D model problem, which serves as a representative template for a broader class of coupled systems with similar characteristics. The results show that the learned preconditioner significantly accelerates the convergence of the iterative Generalized Minimum Residual (GMRES) solver, even as the complexity of the problem increases due to changes in the 1D graph's topology or the size of the 3D domain. Unlike traditional preconditioning techniques, which typically require problem-specific tuning or reconfiguration when the underlying system changes, our method maintains consistent performance across diverse problem settings {\color{black}and mesh resolutions}. Moreover, the proposed preconditioning approach can be easily generalized, making it a versatile and robust tool for a wide array of mixed-dimensional problems beyond the specific examples addressed in this study.

 
This manuscript is organized as follows. First, we introduce the mathematical formulation of the 3D-1D mixed-dimensional PDE and highlight the challenges associated with solving such systems. Then, we present the proposed preconditioner learning framework, discussing both the theoretical foundations and the practical implementation of the nonlinear preconditioner. Next, we apply the preconditioner to the 3D-1D problem, illustrating its construction and providing a detailed analysis of its efficiency and scalability. Finally, we validate the approach through extensive numerical experiments, comparing its performance against conventional preconditioning methods, and conclude with a discussion of the implications and potential future directions for extending this work.

\section{A mixed-dimensional problem}
\label{sec:model}
We consider here a mixed-dimensional elliptic problem as a simple template for a wider class of coupled problems with similar characteristics. 
It will be our reference for comparisons, analyses, and considerations on preconditioner performances. Originally, we define the problem in a three-dimensional (3D) slender domain embedded in an external one, where we set partial differential equations coupled by suitable interface conditions. The mixed-dimensional model is obtained by a \textit{dimensional reduction} strategy that represents the slender interior domain as a one-dimensional (1D) manifold, namely a metric graph. 
More precisely, we address a coupled problem defined by an exterior domain $\Omega\subset\mathbb{R}^3$ and an interior domain $\Sigma\subset\Omega$. We assume $\Sigma$ to be a generalized cylinder with a centerline $\Lambda$, the latter being a 1D domain with arc length parameter $s\in(0,S)$. We assume that the cylinder has a constant radius $\epsilon>0$, so that if $\lambda:(0,S)\to\Lambda$ parameterizes the centerline, then
$\Sigma=\{\lambda(s)+\varepsilon\;:\;s\in(0,S),\|\varepsilon\|\le\epsilon,\;\varepsilon\perp\tau(s)\}$
with $\tau(s)$ the tangent vector at $\Lambda$.
We assume that the transverse diameter of $\Sigma$, which is equal to $2\epsilon$, is small compared to the diameter of $\Omega$. 
The dimensionality reduction process consists of replacing $\Sigma$ with $\Lambda$ while keeping some information about its original 3D structure and its interaction with the external domain $\Omega$. Specifically, let $\Gamma:=\partial\Sigma$ be the interface of the coupled problem in the full 3D representation and let $\mathcal{T}_\Lambda$ be the restriction operator from $\Omega$ to $\Lambda$. In \cite{Kuchta2021558} it is defined as the composition of the trace operator on \(\Gamma\) combined with a projection operator form \(\Gamma\) to \(\Lambda\) based on cross-sectional averages.
Then, the continuous 3D-1D coupled problem can be formally formulated as follows,
\begin{equation}
\begin{cases}
-\nabla \cdot \left(k_\Omega\nabla u_\Omega\right)+\sigma_\Omega u_{\Omega}+2\pi\epsilon\left(\mathcal{T}_{\Lambda} u_\Omega-u_{\Lambda}\right)\delta_{\Lambda}=0, & \text{on}\; \Omega,\\
-d_s \left(k_{\Lambda}\partial_s u_{\Lambda}\right)+2\pi\epsilon\left(u_{\Lambda}-\mathcal{T}_{\Lambda} u_\Omega \right)=0, & \text{on}\; \Lambda, \\ 
-\nabla u_\Omega \cdot \mathbf{n}=0, & \text{on}\; \partial \Omega, \\
-d_s u_{\Lambda} \cdot \mathbf{n}=0, & \text{on}\; \partial \Lambda \setminus \partial\Lambda_D, \\
u_{\Lambda}=1, & \text{on} \; \partial \Lambda_D.
\end{cases}
\label{eq:problem}
\end{equation} 
The functions $u_\Omega, u_\Lambda$ are the coupled unknowns on the 3D and the 1D domains, respectively, and $k_\Omega, \sigma_\Omega , k_\Lambda$  and $2\pi\epsilon$ represent respectively the coefficients of the elliptic operators on the 3D domain, the 1D one and the coupling parameter. Here, $\partial\Lambda_{D}\subset\partial\Lambda$ is a given subset of the 1D domain holding a Dirichlet boundary condition. 
An analogous formulation holds when the low-dimensional structure is given by the union of multiple substructures $\Sigma=\cup_i\Sigma_i$ and $\Lambda=\cup_i\Lambda_i$, each being a generalized cylinder. In this case, $\Lambda$ becomes a 1D graph embedded within a 3D domain. Figure~\ref{fig:3d1d_sol} shows an example of a solution to this mixed-dimensional problem (1D solution on the left, 3D solution in the center).
\begin{figure}[htb!]
    \centering
    \includegraphics[height=0.27\linewidth]{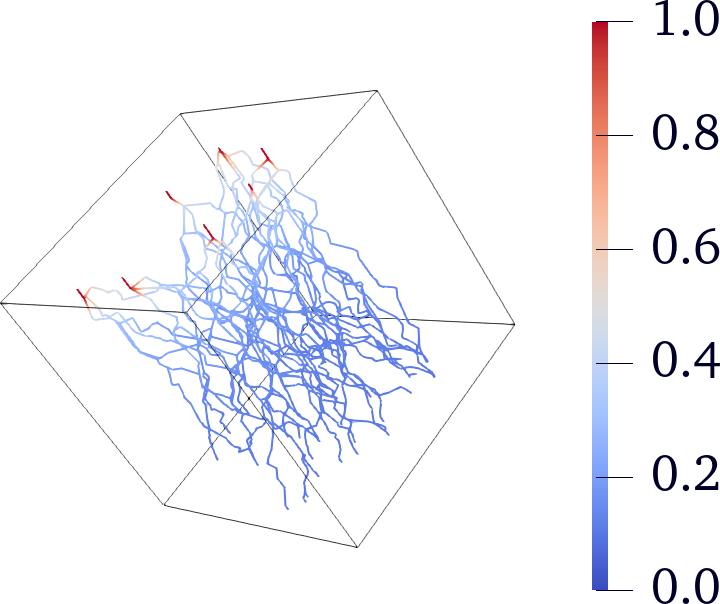}
    \includegraphics[height=0.27\linewidth]{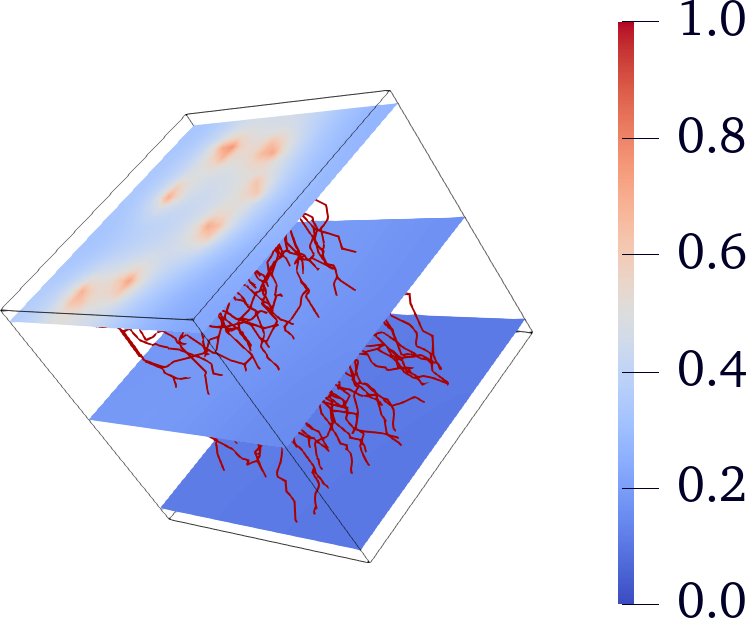}
    \includegraphics[height=0.27\linewidth]{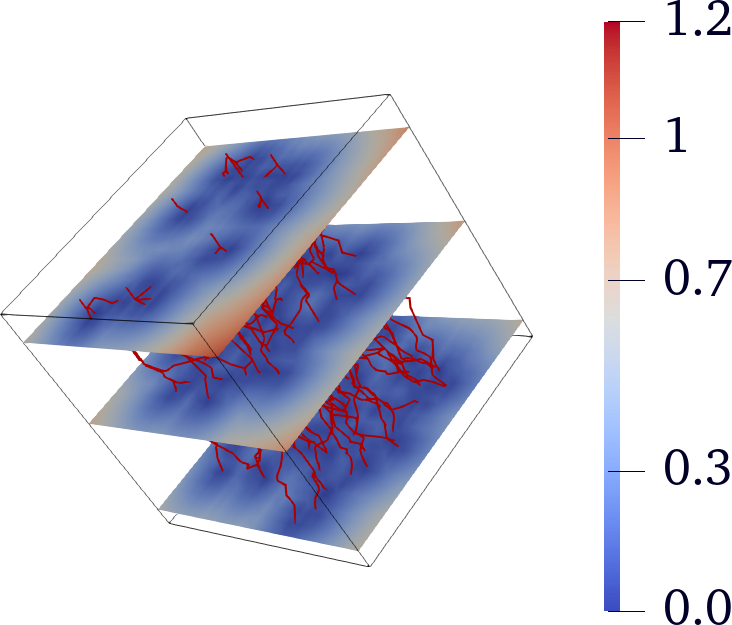}
    \caption{\small \small Solution of a 3D-1D coupled problem for a fixed graph geometry $\Lambda$. \textbf{Left:} plot of 1D solution $u_\Lambda$. \textbf{Center:} slices of 3D solution $u_\Omega$. \textbf{Right:} slices plot of the distance function $d(\Lambda)$ associated to the graph.}
    \label{fig:3d1d_sol}
\end{figure}

\subsection{Parametrization and discretization of the problem}
\label{subsec:discretization}
As we mentioned, we are interested in solving multiple instances of the above problem for different configurations of the low-dimensional structure, including, for instance, changes in
the topology, arc-length or density of the 1D graph $\Lambda$.
To formalize this fact, it is useful to define a suitable \textit{parameter space}, $\mathcal{P}$, that collects all configurations of interest. To this end, we first note that each 1D graph $\Lambda\subset\Omega$ is uniquely identified by its \textit{3D distance function}, $d(\Lambda):\Omega\to\mathbb{R}$,
which maps each point $x\in\Omega$ to its distance from the closest point in $\Lambda$: see the right panel of Figure~\ref{fig:3d1d_sol} for a visual representation.
This allows us to define the parameter space as a subset of the space of continuous functions on $\Omega$, that is, $\mathcal{P}:=\{d(\Lambda)\}_{\Lambda}\subset\mathcal{C}(\Omega).$ This representation has several advantages. First, it can be easily transferred to the discrete setting by introducing a suitable discretization of the exterior domain $\Omega$ using, for example, Finite Elements (FE). Furthermore, it automatically enriches the parameter space by equipping it with a metrizable topology (the one induced by the supremum norm): this allows us to discuss, for instance, about the continuity of the 3D solution with respect to the 1D graph. Finally, it will be useful when discussing our neural preconditioning approach in Section~\ref{sec:Realization}.

We now come to the discretization of \eqref{eq:problem}. To this end, we first address its variational formulation.
Let $V := H^1(\Omega) \times H^1_{\partial\Lambda_D}(\Lambda)$ be the mixed-dimensional space where PDE solutions will be sought. Here, $H^1_{\partial\Lambda_D}(\Lambda)=\{w\in H^{1}(\Lambda):w_{|\partial\Lambda_{d}}\equiv1\}$ accounts for Dirichlet boundary conditions, so that ---up to translations--- $V$ is isomorphic to a Hilbert space. In the parametric setting, re-writing Eq. \eqref{eq:problem} in weak form corresponds to saying that,  for every $\mu\in\mathcal{P}$ we aim to find $u^{\mu} \in V$ such that
\begin{equation}\label{eq:problem_weak}
\abilinear^{\mu}(u^{\mu}, v) = F^{\mu}(v), \quad \forall v \in V,
\end{equation}
with $\abilinear^{\mu}: V \times V \mapsto \mathbb{R}$ and $F^{\mu}: V \mapsto \mathbb{R}$ parameter-dependent operators. 
Specifically,
\begin{equation*}
    \abilinear^{\mu}(u, v) = \abilinear_\Omega(u_\Omega,v_\Omega) + \abilinear_\Lambda^\mu(u_\Lambda,u_\Lambda) + \bbilinear_\Lambda^\mu(\mathcal{T}_\Lambda u_\Omega - u_\Lambda, \mathcal{T}_\Lambda v_\Omega-v_\Lambda),
\end{equation*}
where, being $\mathcal{D}(s)$ the cross section of $\Sigma$ at arc-lengh $s$, and having set \nicola{$$(u,v)_{\Omega}:=\int_{\Omega}u(x)v(x)dx,\quad(u,v)_{\Lambda, |\mathcal{D}|}:=\int_\Lambda u(s) v(s) |\mathcal{D}(s)| ds,\quad (u,v)_{\Lambda, |\partial\mathcal{D}|}:=\int_\Lambda u(s) v(s) |\partial\mathcal{D}(s)| ds,$$} we have,
\begin{align}
    \label{eq:aomega}
    \abilinear_{\Omega}(u,v) &= (k_\Omega \nabla u, \nabla v)_{\Omega} + (\sigma_\Omega u, v)_{\Omega}, \\
    \label{eq:alambda}
    \abilinear_{\Lambda}^\mu(u,v) &= (k_\Lambda d_{\mathbf{s}} u, d_{\mathbf{s}} v)_{\Lambda, |\mathcal{D}|},
    \quad
    \bbilinear_{\Lambda}^\mu(u,v) = 2\pi\epsilon (u, v)_{\Lambda, |\partial \mathcal{D}|}.
\end{align}
We note that the adopted notation is slightly redundant because the parametric dependence of $\abilinear^{\mu}$ is only due to the fact that $\abilinear_\Lambda^\mu$ and $\bbilinear_\Lambda^\mu$ have support in $\Lambda$.


In order to discretize \eqref{eq:problem_weak}, we consider its Galerkin projection onto a (broken) FE space $V_h = V_h^\Omega \times V_h^\Lambda$ of dimension $N_h=N_h^\Omega + N_h^\Lambda$, being $N_h^\Omega = \textrm{dim}(V_h^\Omega)$ and $N_h^\Lambda = \textrm{dim}(V_h^\Lambda)$, which is suitably chosen depending on the characteristics of the problem at hand. 
Assuming for simplicity a fully conformal approximation, given $V_h \subset V$ with \(V_h^\Omega \subset H^1(\Omega) , \ V_h^\Lambda \subset H^1_{\partial\Lambda_D}(\Lambda)\), we aim to find $u_h^\mu \in V_h$ such that
\begin{equation}\label{eq:problem_weak_h}
\abilinear^{\mu}(u^\mu_h, v_h) = F^{\mu}(v_h) \qquad \forall v_h \in V_h.
\end{equation}
From a discrete point of view, the problem \eqref{eq:problem_weak_h} is equivalent to a (large) system of algebraic equations \(A^{\mu}_{h}\uu_{h}^\mu = \mathbf{F}_h^\mu\). Specifically, we have
\begin{equation}
\label{eq:mainsys}
\overbrace{\left(
\underbrace{\left[\begin{matrix}
  K_{h,00} &  0\\[10pt] 
  0  & K_{h,11}^\mu \\
\end{matrix}\right]}_{\displaystyle K_h^\mu}
+2\pi\epsilon
\underbrace{\left[\begin{matrix}
  M_{h,00}^\mu & M_{h,01}^\mu \\[10pt] 
  M_{h,10}^\mu & M_{h,11}^\mu
\end{matrix}\right]}_{\displaystyle M_h^\mu}
\right)}^{\displaystyle A^{\mu}_{h}}
\overbrace{\left[\begin{matrix}
 \uu_{h,0}^\mu  \\[10pt]
 \uu_{h,1}^\mu   \\
\end{matrix}
\right]}^{\displaystyle\uu_{h}^\mu}
= \overbrace{\left[\begin{matrix}
 0  \\[10pt] 
 \ff_{h,1}^\mu   \\
\end{matrix} \right]}^{\displaystyle\mathbf{F}_h^\mu},
\end{equation}
where $\uu_{h}^\mu \in \mathbb{R}^{N_h}$ is the vector of degrees of freedom of the FE approximation;
$K_{h,00}$ and $K_{h,11}^\mu$ represent the discretized bilinear forms \(\abilinear_\Omega\) and  \(\abilinear_\Lambda^\mu\); 
the block matrix $M_h^\mu$ models the coupling enforced by the operator \(\bbilinear_\Lambda^\mu(\mathcal{T}_\Lambda u_\Omega - u_\Lambda, \mathcal{T}_\Lambda v_\Omega-v_\Lambda)\), and the forcing term $\ff^\mu_{h,1}$ takes into account the non-homogeneous Dirichlet condition on the 1D graph.

As we discretize the differential problem, we also take the opportunity to discretize the parameter space, which we do by projecting ---or interpolating--- all 3D distance functions onto $V_h^\Omega$. This allows us to represent each parametric configuration $\mu\in\mathcal{P}$ through a finite-dimensional vector $\mathbf{d}_{h}^\mu \in \mathbb{R}^{N_h^\Omega}$, obtained by listing the nodal values of $\mu=d(\Lambda)$ at the degrees of freedom. In this way, the discrete/finite-dimensional parameter space becomes \(\mathcal{P}_h \equiv \{ \mathbf{d}_{h}^\mu\} \subset \mathbb{R}^{N_h^\Omega} \).

\begin{remark}
    \nicola{As apparent from equations \eqref{eq:aomega}-\eqref{eq:alambda}, we notice that problem \eqref{eq:problem_weak} is symmetric positive definite. However, we will not exploit this property when defining the computational solver. This choice is motivated by two main reasons. The first one is that \eqref{eq:problem_weak} is just a particular case of a general family of operators in which the bilinear form $a_\Omega(\cdot,\cdot)$ may also describe transport phenomena that violate symmetry. The second reason is that, as it will be clearer later on, learned techniques, such as our neural preconditioner,
    can hardly preserve symmetry, unless this constraint is explicitly built into the network architecture---a strategy that currently lacks a reliable and efficient implementation.}
\end{remark}

\begin{remark}
    In what follows, we shall make the assumption that the 3D domain has been discretized using a uniform mesh, consisting either of tetrahedrons or bricks. This requirement will be crucial in Section~\ref{sec:Realization} when implementing our neural network-based preconditioner as it will allow us to exploit the relationship between tensor-like structures and convolutional neural networks. Nevertheless, we emphasize that in the context of mixed-dimensional problems, where the primary challenge is representing a complex low-dimensional structure embedded in 3D, this restriction is acceptable. In fact, this requirement does not constrain the low-dimensional domain, which can still exhibit arbitrary geometric complexity. 
\end{remark}

\subsection{A preconditioning strategy for the coupled problem}
\label{subsec:preconditioning}
As we mentioned, solving \eqref{eq:mainsys} can be computationally intensive due to poor conditioning of the linear system. Here, we address this fact by using a block-preconditioning approach, specifically tailored for the block-structured matrix arising from the discretization of the coupled 3D-1D problem. 
The goal is to employ a right-preconditioner expressed as:
\begin{equation}
\begin{aligned}
&\hspace{25pt} A_h^{\mu}Q_h^\mu \mathbf{z}_h^\mu=\mathbf{F}_h^\mu, \quad Q_h^\mu \mathbf{z}_h^\mu=\mathbf{u}_h^\mu,
\end{aligned}
\label{eq:right-prec full problem}
\end{equation}
where $Q_h^\mu$ is the inverse of the block upper triangular part of $A_h^\mu$.
Employing this right preconditioner to accelerate the convergence of an iterative method, see e.g. \cite{saad2003iterative}, involves conditioning the residual from $[{\zz}_{h,0}^\mu, {\zz}_{h,1}^\mu]^\top$ to $[\tilde{\zz}_{h,0}^\mu, \tilde{\zz}_{h,1}^\mu]^\top$ by solving the following linear system through a two-step back-substitution procedure,
\begin{equation}\label{eq:twosteps}
\left[
\begin{matrix}
(K_{h,00}+2\pi\epsilon M_{h,00}^\mu) &2\pi\epsilon M_{h,01}^\mu \\
0 &(K_{h,11}^\mu+2\pi\epsilon M_{h,11}^\mu)
\end{matrix}
\right]
\left[
\begin{matrix}
    \tilde{\zz}^\mu_{h,0}\\
    \tilde{\zz}^\mu_{h,1}
\end{matrix}
\right]
=
\left[
\begin{matrix}
    \zz_{h,0}^\mu\\
    \zz_{h,1}^\mu
\end{matrix}
\right].
\end{equation}
In general, the first step, which is to solve for $\tilde{\mathbf{z}}_{h,1}^{\mu}$, does not pose significant challenges. This is because the dimension of the discrete 1D problem is typically smaller than the one of the 3D problem, $N_h^\Lambda\ll N_h^\Omega$, making it more manageable to address using traditional techniques, such as direct solvers. In contrast, the second step, which is to solve for $\tilde{\mathbf{z}}_{h,0}^{\mu}$, can be computationally demanding and can severely hinder the applicability of the approach when considering many-query scenarios, where the linear system is to be solved for multiple instances of the model parameters. This step, in fact, is the one where the main challenges inherent to mixed-dimensional problems become apparent. Indeed, as noted in \cite{budisa2024algebraic}, the latter involves elliptic operators altered by a low-order semidefinite component, referred to as the \emph{metric term}. This term, whose impact is modulated by a scalable intensity parameter $\epsilon$, can disrupt the structure and conditioning of the elliptic component, thereby complicating the numerical approximation of the coupled system.
For these reasons, our objective is to develop a nonlinear preconditioner, $\mathscr{P}$,
for the mixed-dimensional system defined on $\Omega$,
\begin{equation}
(K_{h,00}+2\pi\epsilon M_{h,00}^\mu)\tilde{\zz}_{0,h}^\mu = \zz_{h,0}^\mu-2\pi\epsilon M^\mu_{01}\tilde{\zz}_{1,h}^\mu.
\label{eq:decoupled_3d_problem}
\end{equation} 
This nonlinear preconditioner will take two arguments: one that is conformal to the 3D solution (as any classical preconditioner would do), and one that refers to the problem parameters, allowing the preconditioner to adjust depending on the scenario of interest. We shall write $\mathscr{P}=\mathscr{P}(\mathbf{v},\mathbf{d})$ in the discrete setting and, occasionally, $\mathscr{P}=\mathscr{P}(v,\mu)$ when considering the more abstract continuous formulation. Using this notation, and having set
\begin{equation}
\label{eq:matrixC}
C_h^\mu :=(K_{h,00}+2\pi\epsilon M_{h,00}^\mu), \quad
\vv_h^\mu := \tilde{\zz}_{0,h}^\mu, \quad
\forcingBold :=\zz_{h,0}^\mu-2\pi\epsilon M^\mu_{01}\tilde{\zz}_{1,h}^\mu,
\end{equation}
the right-preconditioned family of problems reads $C_h^\mu \mathscr{P}(\mathbf{v}_h^\mu, \mathbf{d}_h^\mu)=\forcingBold.$
Considering the specific structure of $\mathbf{F}_h^\mu$ in \eqref{eq:mainsys}, we focus on making the preconditioner $\mathscr{P}$ efficient for a limited subset of acceptable right-hand sides within $\mathbb{R}^{N_h^\Omega}$. In particular, we are interested in solving the system for the forcing terms that arise from the solution of the 1D problem, namely all $\mathcal{B}=\{\forcingBold\}_{\mu\in\mathcal{P}}$ such that
\begin{equation}\label{eq:setB}
\mathcal{B}:=\left\{-2\pi\epsilon M_{01, h}^\mu \xx\;:\mu \in \mathcal{P}\;\text{and}\; \xx\in\mathbb{R}^{N_h^\Omega}\;\text{with}\; (K_{11}^\mu+2\pi\epsilon M_{11, h}^\mu)\xx = \mathbf{f}_h^\mu \right \}.
\end{equation}
As we anticipated, to guarantee generalization across various configurations of the 1D problem, it is essential to develop a suitable nonlinear preconditioner, $\mathscr{P}$. This involves carefully choosing an appropriate hypothesis space and designing a representative training set, as detailed in the following section.

\section{A learning approach for preconditioning parametrized systems}
\label{Sec: OpLearning}
We shall now discuss the problem of learning preconditioners for parametrized systems.
Although this task is inherently related to finite-dimensional spaces, we find it convenient to explore and present the idea within an infinite-dimensional context.
This shift to a function space setting, as presented in this section, provides a deeper insight into the mathematical structure of the problem. For these reasons, we start this section by fixing some notation. Given two Banach spaces $(V_1, \|\cdot\|_{V_1})$ and $(V_2, \|\cdot\|_{V_2})$ we denote $\linearmaps(V_1,V_2)$ the space of bounded linear operators from $V_1$ to $V_2$, equipped with the operator norm
$$\|A\|_{\linearmaps(V_1,V_2)}:=\sup_{v\in V_1\setminus\{0\}}\displaystyle\frac{\|Av\|_{V_2}}{\|v\|_{V_1}}.$$
It is also useful to introduce the \textit{General Linear Group}
\begin{equation*}
\invertible(V_1, V_2):=\{A\in\linearmaps(V_1, V_2)\;:\;\exists B\in \linearmaps(V_2, V_1),\;BA=\identity_{V_1}\;\text{and}\;AB=\identity_{V_2}\},
\end{equation*}
which consists of all linear operators from $V_{1}\to V_{2}$ that are both invertible and continuous. Here, $\identity_{V_i}$ denotes the identity operator of $V_i$ on itself.
Given a Banach space $(V,\|\cdot\|_V)$, we also denote by $V'$ its topological dual, that is, $V':=\linearmaps(V,\mathbb{R})$.
Our purpose is to address the problem of operator preconditioning for parametrized linear problems in arbitrary (complete) normed spaces, i.e. to find a continuous preconditioner operator $\mathscr{P}=\mathscr{P}(v,\mu)$ for a parametrized problem of the form 
\begin{equation}
\label{eq:problema-astratto}
\text{find}\; \uparam\in V:\quad \Aparam \uparam = \fparam, \quad \forall \mu \in \mathcal{P},
\end{equation}
given a Banach state space $(V,\|\cdot\|_V)$ and a compact metric space $\mathcal{P}$ serving as parameter space. Here, $\Aparam\in\invertible(V,V')$ and $\fparam\in V'$ are parameter-dependent operators and problem data. In general, we think of the latter as a linear system arising from the discretization of a given PDE, such as \eqref{eq:problem_weak_h} or \eqref{eq:mainsys}. Everything, in fact, can be traced back to our discussion in Section~\ref{sec:model}: we refer to Remark~\ref{remark:continuous-discrete} for additional insight on the matter. Furthermore, we anticipate that the upcoming Section~\ref{sec:Realization} will also contain further clarifications on how these concepts translate when put into practice.

We organize the remainder of this Section into distinct parts. Initially, we explore how operator preconditioning is applied to \eqref{eq:problema-astratto}, emphasizing the imperative for a nonlinear preconditioner (Section~\ref{subsec:nonlinear}). Subsequently, we outline the structure of the learning algorithm, presenting a suitable unsupervised training strategy (Section~\ref{subsec:learning}). 

\begin{remark}
    \label{remark:continuous-discrete}
    The advantage of working within an abstract environment is that the idea can be easily transferred from the continuous formulation to the discrete one and vice versa. For instance, following our notation in Section~\ref{subsec:preconditioning}, if we resort to the discrete setting, then $A^\mu\cong C_h^\mu$ 
    is a matrix, while $u^\mu\cong \mathbf{v}_h^\mu$ and $\fparam \cong \tildeforcingBold$ are finite-dimensional vectors; cf. Eq. \eqref{eq:decoupled_3d_problem}. Conversely, if we lift the idea to the continuous level, then $A^\mu$ is the linear operator acting as $u\mapsto a_\Omega(u,\cdot)+b_\Lambda^\mu(\mathcal{T}_\Lambda u,\cdot)$, while $u^\mu$ and $\fparam$ are functions defined over $\Omega$: cf. Section~\ref{subsec:discretization}. 
\end{remark}

\subsection{The need for a nonlinear and matrix-free preconditioner}
\label{subsec:nonlinear}
Classical approaches to operator preconditioning focus on finding a linear preconditioner for a given operator of interest $A\in\linearmaps(V',V)$. In principle, one could apply this idea to parametric problems by addressing each parametric scenario separately.
Indeed, for a generic, but \textit{fixed}, parameter $\bar{\mu} \in \mathcal{P}$
one could consider the corresponding operator $\Aparamfix\equiv A$, with $A:V\to V'$, and then look for a suitable (right) preconditioner
that is, any linear operator $P\in\linearmaps(V',V)$ for which \(\kappa(AP)\ll \kappa(A)\), being $\kappa$ the operator \textit{condition number} $\kappa(B):=\|B\|_{\mathcal{L}{(V_1, V_2)}} \|B^{-1}\|_{\mathcal{L}(V_1, V_2)}$ for $ B\in\invertible(V_1,V_2)$. 
Then, once a suitable preconditioner is available, the original problem $A \uparambar = \fparambar$ can be replaced with a new one, $AP \zparambar = \fparambar$, which is easier to solve, so that $\uparambar = P\zparambar$. In general, we note that up to isometries, the "optimal" preconditioner is $P=A^{-1}$, regardless of the right hand side $\fparambar \in V'$. However, this approach becomes computationally unfeasible if the procedure is repeated multiple times for varying $\mu\in\mathcal{P}$.
A first alternative could be to look for a surrogate $\tilde{\mathscr{P}}:\mathcal{P}\to\linearmaps(V',V)$ that, for each $\mu\in\mathcal{P}$, returns a suitable preconditioner for $A^\mu u^\mu = \fparam$. However, we do not find this solution to be very practical as that would entail casting the learning problem within an extremely high-dimensional space. Notice, in fact, that in the discrete setting one has $\dim\linearmaps(V',V)=(N_h^\Omega)^2$. Furthermore, even if we restrict our attention to sparse preconditioners, this is still very challenging as the sparsity pattern would likely depend on the structure of the underlying 1D problem, i.e. on $\mu\in\mathcal{P}$.
A more practical approach can be to rely on a matrix-free formulation. There, one is not directly interested in the preconditioner by itself, but rather in the way it acts when applied to an input vector. Mathematically speaking, this change of perspective corresponds to looking for a map, potentially \textit{nonlinear}, of the form $\mathscr{P}:V'\times\mathcal{P}\to V.$ This idea is further motivated by the following Lemma.
\begin{lemma}
\label{lemma:nonlinear}
Let $\mathcal{P}\ni\mu\mapsto \Aparam\in\linearmaps(V,V')$ be continuous. For any $\mu\in\mathcal{P}$ and any $v\in V'$ let $x^{\mu,v}$ be the solution of $A^\mu x^{\mu,v}=v.$
Then, there exists a continuous (nonlinear) operator $\preconditioner: V' \times \mathcal{P} \to V$ such that 
$$\preconditioner(v, \mu) = x^{\mu,v} \quad \forall (v, \mu) \in V' \times \mathcal{P}.$$
In particular, $A^\mu\mathscr{P}(v,\mu)=v$ for all $(v,\mu)\in V'\times\mathcal{P}.$
\end{lemma}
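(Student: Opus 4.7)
The natural candidate is the pointwise definition $\mathscr{P}(v,\mu):=(A^\mu)^{-1}v$, which is well-defined because the assumption on $x^{\mu,v}$ tacitly requires $A^\mu\in\invertible(V,V')$ for every $\mu\in\mathcal{P}$. The algebraic identity $A^\mu\mathscr{P}(v,\mu)=v$ is then immediate, so the whole content of the Lemma reduces to proving joint continuity of $\mathscr{P}$ on $V'\times\mathcal{P}$. My plan is to split this into three ingredients: (i) continuity of operator inversion, (ii) a uniform bound on $\|(A^\mu)^{-1}\|_{\linearmaps(V',V)}$ over $\mu\in\mathcal{P}$, and (iii) a triangle-inequality decomposition that combines (i) and (ii) with the continuity of $\mu\mapsto A^\mu$.

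For (i), I would use the classical algebraic identity $B^{-1}-A^{-1}=A^{-1}(A-B)B^{-1}$ for $A,B\in\invertible(V,V')$. Because $\invertible(V,V')$ is open in $\linearmaps(V,V')$ (Neumann series argument), whenever $B$ lies in a small ball around $A$ one obtains $\|B^{-1}\|\le 2\|A^{-1}\|$ and therefore $\|B^{-1}-A^{-1}\|\le 2\|A^{-1}\|^2\|A-B\|$. Composing this with the continuous map $\mu\mapsto A^\mu$, the map $\mu\mapsto(A^\mu)^{-1}\in\linearmaps(V',V)$ is continuous.

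For (ii), I would invoke compactness of $\mathcal{P}$: the image $\{(A^\mu)^{-1}:\mu\in\mathcal{P}\}$ is the continuous image of a compact set, hence compact (in particular bounded) in $\linearmaps(V',V)$. Thus there exists a finite constant $C_{\mathcal{P}}$ with $\sup_{\mu\in\mathcal{P}}\|(A^\mu)^{-1}\|_{\linearmaps(V',V)}\le C_{\mathcal{P}}$.

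For (iii), given $(v_n,\mu_n)\to(v,\mu)$, I would write
\begin{equation*}
\mathscr{P}(v_n,\mu_n)-\mathscr{P}(v,\mu)=(A^{\mu_n})^{-1}(v_n-v)+\bigl[(A^{\mu_n})^{-1}-(A^{\mu})^{-1}\bigr]v.
\end{equation*}
The first summand is bounded by $C_{\mathcal{P}}\|v_n-v\|_{V'}\to 0$ thanks to (ii), and the second tends to zero in $V$ by (i). This yields joint continuity and completes the proof. The only genuinely non-routine point is checking that compactness of $\mathcal{P}$ is really needed to avoid an a priori loss of control of $\|(A^\mu)^{-1}\|$; without it, inversion would still be continuous pointwise, but the first summand in the decomposition could blow up, and that is where I expect the argument to require the most care.
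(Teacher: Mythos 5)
Your proposal is correct and follows essentially the same route as the paper: define $\preconditioner$ pointwise as $(A^\mu)^{-1}v$, establish continuity of the inversion map $\mathscr{I}:B\mapsto B^{-1}$ on $\invertible(V,V')$, and deduce joint continuity of $\preconditioner$ via the decomposition
$$\preconditioner(v_n,\mu_n)-\preconditioner(v,\mu)=(A^{\mu_n})^{-1}(v_n-v)+\bigl[(A^{\mu_n})^{-1}-(A^{\mu})^{-1}\bigr]v.$$
The paper's Appendix Lemma proves continuity of inversion directly through a Neumann-series estimate, while you use the algebraic identity $B^{-1}-A^{-1}=A^{-1}(A-B)B^{-1}$ together with a local bound of the form $\|B^{-1}\|\le2\|A^{-1}\|$; both are standard and yield the same conclusion, yours being slightly more streamlined.

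The one point to correct is your step (ii) and the concluding remark. Compactness of $\mathcal{P}$ is not needed here, and the worry that the first summand in the decomposition could blow up without it does not materialize: once you know from (i) that $\mu\mapsto(A^\mu)^{-1}$ is continuous, the sequence $(A^{\mu_n})^{-1}$ converges in $\linearmaps(V',V)$ and is therefore automatically bounded, since any convergent sequence in a normed space is bounded. That \emph{local} boundedness is all that step (iii) requires. Compactness of $\mathcal{P}$ would give you a \emph{global} bound on $\sup_{\mu}\|(A^\mu)^{-1}\|$ and hence uniform continuity on bounded subsets of $V'\times\mathcal{P}$, but the lemma only asserts continuity, and neither the statement's conclusion nor the paper's proof invokes compactness for it. So step (ii) is superfluous, and the claim that the argument would require the most care there is a misdiagnosis.
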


\begin{proof}
Let $\mathscr{A}:\mathcal{P}\to\invertible(V,V')$ be the map $\mathscr{A}(\mu) = \Aparam$. Let $\mathscr{I}:\invertible(V,V')\to \invertible(V',V)$ be the inversion map, $B\mapsto B^{-1}$. It is well known that $\mathscr{I}$ is continuous --- for a rigorous proof, we refer the interested reader to the Appendix, Lemma~\ref{lemma:inversion}.
Let $\preconditioner: V' \times \mathcal{P} \to V$ be defined as
$\preconditioner(v, \mu) := \left[(\mathscr{I}\circ \mathscr{A})(\mu)\right](v).$
Then, $\preconditioner$ is continuous and $\mathscr{P}(v,\mu)=(A^\mu)^{-1}v=x^{\mu,v}$, as claimed.
\end{proof}
As seen in Lemma~\ref{lemma:nonlinear}, in order to adapt to different parametric scenarios, the preconditioner $\mathscr{P}$ must be nonlinear in its arguments (see also Remark~\ref{remark:nonlinear}). 
In light of this, hereon, when speaking of a nonlinear preconditioner for a parametric problem such as \eqref{eq:problema-astratto}, we intend any continuous map $\mathscr{P}:V'\times\mathcal{P}\to V$ for which $\Aparam\preconditioner (v, \mu) \approx v$, or equivalently,
$(\identity -  \Aparam\preconditioner(\cdot, \mu))v\approx 0$.
Ideally, this should hold for all $\mu\in \mathcal{P}$ and all right-hand sides of interest $v\in \mathcal{K}^\mu\subset V'$, with the latter subset potentially depending on the parameter instance $\mu$. For example, the simplest choice could be $\mathcal{K}^\mu=\{\fparam\}$, so that $\mathscr{P}(\fparam,\mu)\approx u^\mu$. Setting $\mathcal{K}^\mu=V'$ is also possible but not strictly necessary: we recall, in fact, that our ultimate goal is to solve \eqref{eq:problema-astratto}.
We shall return to this in Section~\ref{subsec:learning}, as we address the details of our learning strategy, and in Section~\ref{subsec:Kmu}, where we discuss the actual implementation of the approach and the definition of $\mathcal{K}^\mu$.
\begin{remark}
    \label{remark:nonlinear}
    In principle, one could restrict oneself to continuous maps $\mathscr{P}:V'\times\mathcal{P}\to V$ that are linear in their first argument, $\mathscr{P}=\mathscr{P}(v,\mu)$, as it happens for the "ideal" preconditioner in Lemma~\ref{lemma:nonlinear}. However, allowing for more general maps gives additional freedom in designing the preconditioner, potentially simplifying the learning process. For example, assume that $\mathcal{P}$ is compact and that the map $\mathscr{F}:\mu\to\fparam$ is both continuous and injective. Then, $\mathscr{F}^{-1}:\mathscr{F}(\mathcal{P})\to\mathcal{P}$ exists, is continuous and can be extended to $V'\supset\mathscr{F}(\mathcal{P})$ using, e.g., Dugundji's extension of Tieze's Theorem \cite{dugundji1951extension}. With this setup, consider the simple scenario $\mathcal{K}^{\mu}=\{b^\mu\}$ and let $\mathscr{P}$ be the map in Lemma 1. 
    Define $\hat{\mathscr{P}}:V'\times\mathcal{P}\to V$ as $\hat{\mathscr{P}}(v,\mu):=\mathscr{P}(v,\mathscr{F}^{-1}(v))$. Then, $\hat{\preconditioner}\neq\preconditioner$ is nonlinear in $v$, does not depend explicitly on $\mu$ (only implicitly), but still $A^\mu\hat{\preconditioner}(v,\mu)=v$ for all $\mu\in\mathcal{P}$ and all $v\in \mathcal{K}^\mu\subset V'$.
\end{remark}

\subsection{Learning strategy}
\label{subsec:learning}
Following our previous discussion, our purpose here is to learn a (nonlinear) preconditioner for a parametrized problem
given as in Eq. \eqref{eq:problema-astratto}.
In practice, this involves fixing a suitable \textit{hypothesis space} $\hypospace$ and finding the optimal preconditioner $\mathscr{P}\in\mathcal{H}$ by minimizing a suitable functional known as \textit{loss function}.
In light of Lemma~\ref{lemma:nonlinear}, we
let $\hypospace\subset\mathcal{C}(V'\times \mathcal{P},V)=\{\preconditioner:V'\times \mathcal{P}\to V\;\textnormal{continuous}\}$ be a subset of the space of continuous functions from $V'\times\mathcal{P}$ to $V$. For instance, if we resort to deep learning techniques, as we shall do in the remaining of the paper, the latter can be thought of as the space of all possible neural network models associated to a given architecture of choice. In this case, we'll refer to $\preconditioner$ as to a \textit{neural preconditioner}.
Concerning the loss function, instead, different approaches become available. In principle, one could define the nonlinear preconditioner $\preconditioner$ by requiring either $\preconditioner(v, \mu)\approx x^{\mu,v}$ or $(\identity -  \Aparam\preconditioner(\cdot, \mu))v\approx0$. In fact, we notice that if the original problem admits a unique solution for each $\mu\in \mathcal{P}$, then every $\preconditioner$ that fulfills $\preconditioner(\fparam, \mu)=\uparam$ yields $\Aparam\preconditioner(\fparam, \mu) = \fparam$ and vice versa. In this sense,
the two approaches may seem equivalent.
However, our hypothesis space will typically not contain the global minimizer in Lemma~\ref{lemma:nonlinear}, reason for which minimizing the \textit{error }$\|\preconditioner(v, \mu)-x^{\mu, v}\|$ or the \textit{residual} $\|(\identity -  \Aparam\preconditioner(\cdot, \mu))v\|$ can provide completely different results. Furthermore, the two approaches differ significantly in their implementation: the first one is \textit{supervised}, as it requires actually solving \eqref{eq:problema-astratto} in order to compute $x^{\mu,v}$ for multiple $\mu\in \mathcal{P}$; in contrast, the second approach is fully \textit{unsupervised}. 

This work focuses exclusively on the second approach. This is because preliminary investigations by the authors found the supervised approach to be extremely inefficient, with high costs during the training phase, significant challenges in minimizing the loss function, and unsatisfactory results.
In contrast, the unsupervised approach is more practical, as it does not require sampling from the solution manifold $\mathcal{S}=\{\uparam\}_{\mu\in \mathcal{P}}$. 
All of this considered, a first definition of the learning problem can be
\begin{equation}
\label{eq:learning0}
\preconditioner_{*}=\argmin_{\preconditioner\in\hypospace}\;\int_{\mathcal{P}}\|(\identity-\Aparam\preconditioner(\cdot, \mu))\fparam\|_{V'}\probability(d\mu),
\end{equation}
for a given probability measure $\probability$ defined over the parameter space $\mathcal{P}$. The downside of this approach is that for each $\mu\in \mathcal{P}$, we focus solely on how well $\preconditioner$ operates on $\fparam$.  
As anticipated in Section~\ref{subsec:nonlinear}, a better approach can be to introduce a subset $\mathcal{K}^\mu\subset V'$ of possible right-hand-sides (depending in general on $\mu$), and reformulate \eqref{eq:learning0} as
\begin{equation}
\label{eq:learning1}
\preconditioner_{*}=\argmin_{\preconditioner\in\hypospace}\;\int_{\mathcal{P}}\int_{\mathcal{K}^\mu}\|(\identity-\Aparam\preconditioner(\cdot, \mu))v\|_{V'}\probability_\mu(dv)\probability(d\mu),
\end{equation}
where $\probability_\mu$ is a suitable probability measure supported over  $\mathcal{K}^\mu\subset V'$. Clearly, \eqref{eq:learning0} is a special case of \eqref{eq:learning1}, since it can be obtained letting $\mathcal{K}^\mu=\{\fparam\}$ be a singleton and $\probability_{\mu}=\delta_{\fparam}$ be a Dirac delta measure. The question of defining $\mathcal{K}^\mu$ and the corresponding probability measure is generally intriguing, yet quite complex. We will defer this discussion to Section~\ref{subsec:Kmu}. We anticipate that $\mathcal{K}^\mu$ should contain all the components that are detrimental to the well-posedness of $A^\mu$ and, in general, to the convergence rate of the algebraic solver considered. 

From a practical point of view, the ideal minimization problem in Eq. \eqref{eq:learning1} can be addressed through \textit{empirical risk minimization} \cite{KOVACHKI2024419,BOULLE202483}. Specifically, let $\mu_{1},\dots,\mu_{N_{\mathcal{P}}}$ be an independent random sample identically distributed (i.i.d.) with $\mu_{j}\sim\probability$. For each $j=1,\dots,N_{\mathcal{P}}$ let $\mathcal{K}^{\mu_j}:=\{v_{1,j},\dots,v_{N_{\mathcal{K}^{\mu_j}}, j}\}\subset \mathcal{K}^{\mu_j},$
be an i.i.d. random sample with $v_{i,j}\sim\probability_{\mu_j}$. Consider the empirical measures
\begin{equation*}
    \probability_{N_{\mathcal{P}}} = \frac{1}{N_{\mathcal{P}}} \sum_{j=1}^{N_{\mathcal{P}}} \delta_{\mu_j},\quad \probability_{N_{\mathcal{K}^{\mu_j}}} = \frac{1}{N_{\mathcal{K}^{\mu_j}}} \sum_{v\in \mathcal{K}_{\mu_j}}\delta_{v},\quad
    \probability_{N^{\mathcal{K}^{\mu}}} = \sum_{j=1}^{N_{\mathcal{P}}}\mathbf{1}_{\{\mu_j\}}(\mu) \probability_{N_{\mathcal{K}^{\mu_j}}},
\end{equation*}
where $\delta_{x}$ is the Dirac measure centered at $x$. Following classical procedures, we define the empirical risk as
\begin{equation}
\begin{aligned}  
    \label{eq:risk}
    \risk(\preconditioner)&:=\int_{\mathcal{P}}\int_{\mathcal{K}^\mu}\|(\identity-\Aparam\preconditioner(\cdot, \mu))v\|_{V'}\probability_{N_{\mathcal{K}^\mu}}(dv)\probability_{N_\mathcal{P}}(d\mu)=\\ 
    &=\frac{1}{N_{\mathcal{P}}}\sum_{j=1}^{N}\frac{1}{N_{\mathcal{K}^{\mu_j}}}\sum_{v\in \mathcal{K}^{\mu_j}}\|(\identity-\Aparamj\preconditioner(\cdot, \mu_j))v\|_{V'}.
\end{aligned}
   \end{equation}
Then, the \textit{training phase} consists in solving the following minimization problem
\begin{equation}
    \label{eq:trained_preconditioner}
    \tilde{\preconditioner}_{*}:=\argmin_{\preconditioner\in\hypospace}\;\risk(\preconditioner).
\end{equation}
Note that while this step might be computationally demanding, it has to be performed only once. After training, the preconditioner can be readily applied to any problem in the parametric class, without further optimizations or assembling stages. In particular, if one designs the hypothesis space to consist of neural network models, this can provide significant speed ups, as the computational cost associated with online evaluations of $\tilde{\preconditioner}_*$ becomes nearly negligible (on the order of milliseconds).

\section{Realization of the learning approach}
\label{sec:Realization}
In this section, we return to the finite-dimensional setting to make the preconditioner learning strategy specific to the discretization of the parametrized mixed-dimensional problem \eqref{eq:decoupled_3d_problem}-\eqref{eq:matrixC}. 
As we address here the 3D problem arising from the elimination of the 1D equations from the coupled system, with a slight abuse of notation, we write $N_h$ instead of $N_h^\Omega$. Once a suitable preconditioner $\mathscr{P}:\mathbb{R}^{N_h} \times \mathbb{R}^{N_h}\to \mathbb{R}^{N_h}$ has been identified (we recall that the finite-dimensional parameter space is $\mathcal{P}_h \subset \mathbb{R}^{N_h}$), it can be used to accelerate the solution of the discrete problem. In general, this can be achieved in multiple ways. Here, we propose the integration of the nonlinear preconditioner within a GMRES solver,{\color{black} motivated by the fact that we cannot guarantee the symmetry of the neural preconditioner}. In addition, considering the nonlinear nature of the preconditioner $\mathscr{P}$, we shall adopt the Flexible GMRES (FGMRES) algorithm \cite{saad1993flexible}, as the latter can easily accommodate variations of the preconditioning operator with the input vector.

\subsection{The training set $\mathcal{K}^\mu$}
\label{subsec:Kmu}
The selection of the training set \( \mathcal{K}^\mu \) is a crucial factor in the success of the operator learning approach for preconditioning. In fact, the training set must match the underlying behavior of the operator to ensure that the learned preconditioner is effective in different instances of the parameter space \( \mathcal{P} \). 
To start, we notice that inside the FGMRES algorithm, the preconditioner will act only on vectors from the unit sphere $\unitsphere:=\{\mathbf{v}\in\mathbb{R}^{N_h}\;:\;\|\mathbf{v}\|=1\}$. Thus, we let $\mathcal{K}^\mu\subset\mathbb{S}^{N_h-1}.$ Next, based on the findings presented in \cite{budisa2024algebraic}, we propose that the training set should be guided by the kernel structure of the operator. In fact, the coupling between the 3D and 1D domains introduces significant challenges in the construction of efficient solvers, particularly because the kernel of the operator \( M_{h,00}^\mu \) contains high-frequency components that must be taken into account by the computational solvers. In the context of the algebraic multigrid (AMG) approach addressed in \cite{budisa2024algebraic}, it is shown that an essential property for the robustness of the AMG solver is the following kernel decomposition condition:
\begin{equation*}
    \mathrm{ker}(M_{h,00}^\mu) = \mathrm{ker}(M_{h,00}^\mu) \cap V_c + \sum_{j=1}^{J} \mathrm{ker}(M_{h,00}^\mu) \cap V_j\,,
\end{equation*}
where in the case of AMG algorithms \(V_c\) refers to the coarse subspace that captures the low-frequency components of the solution and \(V_j\) are the fine subspaces that represent localized corrections in specific regions or aggregates. In the context of a multiresolution method such as AMG, this condition suggests that the numerical solver should operate on all components of the kernel of the coupling operator. The richer the kernel subspace, possibly including low- and high-frequency components, the harder it is to develop good solvers.
This issue is closely related to the concept of \textit{spectral bias} in neural network training, which states that lower-frequency components are learned more easily and earlier during training, while higher-frequency components are more challenging to learn \cite{rahaman2019spectralbiasneuralnetworks,zhiqin2024frequency}. For the approximation of mixed-dimensional problems, this spectral bias has direct implications. Specifically, the high-frequency components in the kernel of the coupling operator \( M_{h,00}^\mu \) may not be adequately captured if the training process is not designed to address this inherent difficulty.
To mitigate this challenge, we include in \( \mathcal{K}^\mu \) samples that span both the low-frequency and high-frequency components of the kernel of \( C^{\mu}_h \), defined in \eqref{eq:matrixC}. In the context of operator learning, this insight is reflected in the definition of the measure $\varrho_\mu(dv)=\varrho_{N_{\mathcal{K}^\mu}}$. This considered, we propose the following training set structure:
\begin{equation}
    \mathcal{K}^\mu = \bigcup_{j=1}^{N_{\mathcal{P}}} \mathcal{K}^{\mu_j}, \quad  \mathcal{K}^{\mu_j}:= \left\{\frac{b_h^{\mu_j}}{||b_h^{\mu_j}||} \right\} \cup \mathcal{D}^{\mu_j}, \quad b_h^{\mu_j} \in \mathcal{B}; 
\end{equation}
where $\mathcal{D}^{\mu_j}$ represents a suitable data-augmentation set, given the parameter $\mu_j$ (see Fig.~\ref{fig:TrainSet_scheme}). In this way, the operator $\mathscr{P}(\cdot, \mu)$ can be trained to act as a preconditioner on a suitable subset of the unit sphere with a specified frequency content.
In what follows, we examine two distinct strategies for data augmentation, while the next section will present numerical experiments that support this methodology and demonstrate that incorporating both right-hand side vectors and kernel components into \( \mathcal{K}^\mu \) improves the convergence rates in FGMRES with the application of the learned preconditioner.
\begin{figure}
     \centering
     \includegraphics[width=0.7\linewidth]{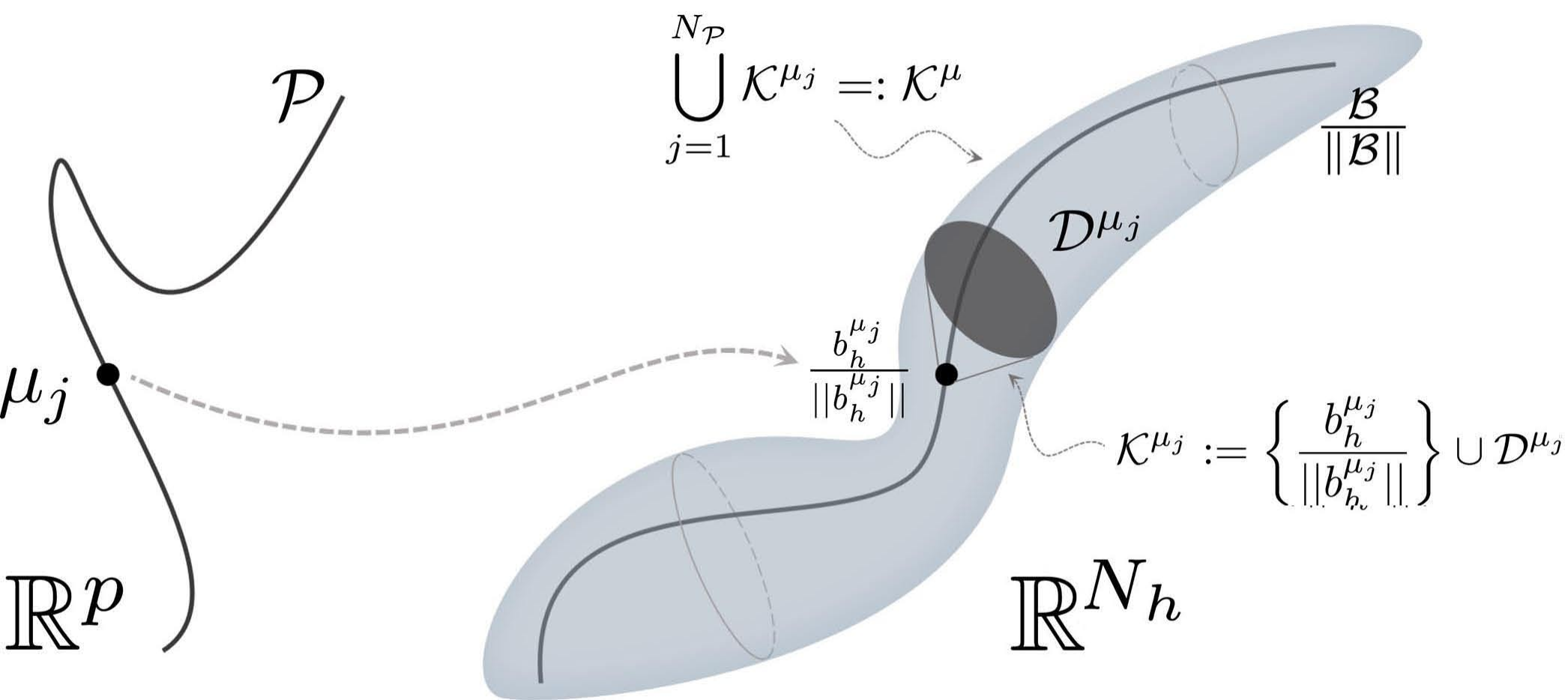}
     \caption{\small \small Pictorial summary for the construction of the augmented training set $\mathcal{K}^\mu$. The data augmentation set $\mathcal{D}^{\mu_j}$ contains vectors in the unit sphere with the desired frequency content. 
     Here, $\mathcal{B}/||\mathcal{B}||:=\left\{b/||b||: b \in  \mathcal{B}\right\} \subset \unitsphere$ is the set of normalized right-hand side vectors.}
     \label{fig:TrainSet_scheme}
\end{figure}

\subsubsection{Augmented Training Set by Krylov Subspaces}
\label{SubSec:Kry_train}
Our initial data-augmentation strategy involves 
incorporating high-frequency elements derived from the construction of Krylov subspaces related to the matrix. As a starting point, the Krylov subspace associated with a general operator \( O \in \mathcal{L}(X,X) \) and a vector \( x \in X \) is defined as:
\[\mathcal{K}_q(O, x) := \text{span}(x, Ox, O^2 x, \ldots, O^{q-1}x)\]
where \( q \) is the order of the Krylov subspace. This subspace contains progressively higher-order components of \( x \) under the action of the operator \( O \), capturing information about the behavior of the system across multiple scales. By including vectors from these Krylov subspaces in our training set, we effectively augment the representation of both low- and high-frequency components in the solution space.

{\color{black}Specifically, for a generic parameter $\mu_j$, let \( O = C_h^{\mu_j} \) be the discrete operator associated with the mixed-dimensional PDEs described in previous sections. We construct the data-augmentation set $\mathcal{D}^{\mu_j}$ as
$$
\mathcal{D}^{\mu_j} := \left\{ q_{p^\prime}^{\mu_j} \ldots q_{(p+p^\prime)}^{\mu_j} \right\},$$
where $0 \leq p^\prime$ is a suitable shift index and $q_{i}^{\mu_j},\,i=0,\ldots,p^\prime+p$
is a suitable orthonormal basis of $\mathcal{K}_{(p+p^\prime)}(C_h^{\mu_j}, b_h^{\mu_j})$ with $q_0^{\mu_j} \equiv b_h^{\mu_j}/\|b_h^{\mu_j}\|$.
These additional basis functions aim to capture a wider spectrum of the operator's behavior, thus enhancing the preconditioner’s ability to address different scales and frequencies present in the solution.
In this context, it is convenient to keep \( p \) and \( p^\prime \)small and of the same magnitude (i.e. $p\simeq p^\prime$)}.  This lies in the balance between representing the essential high-frequency characteristics of the operator \( C_h^\mu \) and maintaining computational efficiency. The choice of small \( p,p^\prime \) allows us to efficiently construct the augmented training set without significantly increasing the computational burden during the training phase. As will be shown later (\ref{fig:spectrum}), even with a small value of \( p+p^\prime \), the Krylov subspace captures important high-order effects that improve the robustness of the preconditioner.\\

\subsubsection{Augmented Training Set by Random Vectors}
\label{SubSec:hf_train}
A second approach involves adding to the physics-based training subset $\mathcal{B}$ a new set of completely unrelated random functions. Specifically, we propose augmenting the training set by generating random unity vectors $\mathbf{r}_h \in \unitsphere \subset \mathbb{R}^{N_h}$, uniformly sampled from the unit hypersphere. In practice, each $\mathbf{r}_h$ can be generated by drawing and normalizing a random vector $\mathbf{v} \sim \mathcal{N}(\mathbf{0}, \mathbf{I}_{N_h})$ with standard multivariate normal distribution, that is, $ \mathbf{r}_h = \mathbf{v} / \|\mathbf{v}\| $.
%
%
{\color{black}Such vectors are designed to introduce high-frequency content that is independent of the physics of the problem, thereby complementing the functions based on physics \( \mathbf{b}_h^\mu  \in \mathcal{B}\).} In fact, given a finite element discretization based on a FEM space \( V_h(\Omega) \), where \( \Omega \) represents the three-dimensional computational domain and \( h \) denotes the characteristic element size, the vectors \( \mathbf{r}_h \) can be interpreted as the highest frequency modes that can be resolved by the mesh. The frequency associated with these random vectors is proportional to \( |\Omega|/h \), which corresponds to the smallest possible wavelength that can be captured in the discretized domain. 
Mathematically, we can define the data augmentation set as \( \mathcal{D}^{\mu_j} := \left\{ r_{h,1}, r_{h,2}, \ldots \right\}\). Note that these vectors are independent of $\mu$ but a different instance of $\mathcal{D}$ is defined for each $\mu_j$.
We expect this strategy to improve robustness with respect to high-frequency vectors that are inside $\ker(M_{h,00}^\mu)$.

\subsection{Hypothesis space for $\mathscr{P}$}
\label{sec:unet}
At this stage, it remains to define the hypothesis space $\mathcal{H}$, over which the empirical risk minimization will be performed \eqref{eq:trained_preconditioner}. Following a deep-learning approach, the hypothesis space is specified once a particular \textit{neural architecture} $\mathcal{N}_H$ is selected. This architecture is characterized by a composition of linear and nonlinear functions, which in turn are defined by a set of \textit{hyper-parameters} $H$. The resulting hypothesis space $\mathcal{H}$ can thus be expressed as $\mathcal{H} = \{\mathcal{N}_H(\cdot \;; \theta)\}_{\theta \in \mathbb{R}^t}$ where $\theta$ is the vector of trainable parameters associated with $\mathcal{N}_H$, over which the optimization is performed.
Drawing inspiration from cutting-edge deep learning techniques in image processing, as proposed in \cite{Azulay2023S127}, which employ varying resolutions to effectively manage the diverse scales of images through pooling and upscaling operations, the neural architecture chosen to represent the preconditioner will be a \textit{U-net}, see for example \cite{ronneberger2015u} for a landmark paper and \cite{williams2024unifiedframeworkunetdesign} for recent developments. This architecture is highly capable of addressing multi-scale issues as it effectively captures both global and local features across varying resolutions. This makes it a strong candidate function for representing solutions of mixed-dimensional problems, such as the ones represented in Figure~\ref{fig:3d1d_sol}. According to this choice, we define $\mathcal{N}_H(\cdot\;; \theta)\equiv\mathcal{U}_L(\cdot\;; \theta)$, where $\mathcal{U}_L$ represents a U-net architecture with $L$ levels. For the sake of simplicity, we will refer only to the hyper-parameter $L$ to indicate the depth of the U-net, although additional parameters such as bottleneck size, number of channels, and convolutional kernel sizes are also involved. Consequently, the desired nonlinear preconditioner operator can be represented as $\mathscr{P} \equiv \mathcal{U}_L(\cdot\; \theta^\star)$, where $\theta^\star$ denotes the set of optimized parameters.
The U-net architecture is structured as a combination of two components, an encoder function $\Phi$ and a decoder function $\Psi$:
\[\Phi := \Phi_{0} \circ \Phi_{1} \circ \ldots \circ \Phi_{L- 2} \circ \Phi_{L-1}: \mathbb{R}^{c_{in} \times n}  \rightarrow \mathbb{R}^{c_b \times m };\]
\[\Psi := \Psi_{L-1} \circ \Psi_{L-2} \circ \ldots \circ \Psi_{1} \circ \Psi_{0}: \mathbb{R}^{ c_b \times m} \rightarrow \mathbb{R}^{c_{out} \times n}.\]
Then, given an input tensor ${\bf X}=[{\bf X}_1|{\bf X}_2|...|{\bf X}_{c_{in}}]$, where ${\bf X}_i \in \mathbb{R}^{n_1 \times n_2 \times \ldots \times n_d}$, and a prescribed number of output channels $c_{out}$, a U-net with $j$ levels can be defined recursively as follows: 
\[\mathcal{U}_j({\bf X}) := \Psi_j \left([\mathcal{U}_{j-1} \circ \Phi_j | \Phi_j]\right)({\bf X}), \quad \; \mathcal{U}_0({\bf X}) := \Psi_0 \circ \Phi_0({\bf X}),\]
where the notation $[a | b]$ represents a \textit{channel stacking} operation that concatenates tensors $a$ and $b$ along the channel dimension. Specifically, if $a \in \mathbb{R}^{c_a  \times m}$ and $b \in \mathbb{R}^{c_b \times m }$, then we have $[a | b] \in \mathbb{R}^{(c_a+c_b)\times m}$.
Operation $[\mathcal{U}_{j-1} \circ \Phi_j ({\bf X})| \Phi_j({\bf X})]$ is called the $j$-th \textit{ skip connection}, which transmits information from the $j$-th encoding layer to the corresponding $j$-th decoding layer. Skip connections are critical in retaining and merging high-resolution features, ensuring that the decoder can effectively recover fine-scale information. In the literature, the term $c$ refers to the \textit{number of channels} at each layer. Specifically, $c_b$ is the number of channels in the bottleneck layer of the U-net, while $c_{in}$ and $c_{out}$ denote the number of input and output channels, respectively.
The convolutional layer operates locally by sliding a kernel across the input tensor, extracting features from small spatial neighborhoods. Given a \(d\)-dimensional input tensor \({\bf X} \in \mathbb{R}^{c_{in} \times n_1 \times n_2 \times \ldots \times n_d}\), the convolution operation is defined as:
\[
    {\bf Y}_i = \sum_{j=1}^{c_{in}} {\bf k}_{i,j} \ast {\bf X}_j \ \text{for} \; i = 1, 2, \ldots, c_{out}, 
    \ \textrm{with} \ 
    \left({\bf k}_{i,j} \ast {\bf X}_j\right)(\mathbf{p}) = \sum_{\mathbf{q} \in \mathbb{Z}^d} {\bf k}_{i,j}(\mathbf{q}) \, {\bf X}_j(\mathbf{p} - \mathbf{q}),
\]
\({\bf k}_{i,j} \in \mathbb{R}^{s_1 \times s_2 \times \ldots \times s_d}\) being the kernel for the \((i,j)\)-th input-output channel pair, \(\mathbf{p}\) indexing the output tensor, and \(\mathbf{q}\) spanning the kernel positions.
This operation enables convolutional layers to act as local feature extractors, capturing patterns from the input tensor. Input data, often provided as ordered vectors, must be reshaped into tensor format to preserve spatial correlations. Once trained, the kernel can operate on input tensors of arbitrary dimensions, provided the structure is compatible, such as tensor-product meshes. As a result, the U-Net architecture can handle inputs of any size, which may increase its application beyond the dimensions of the training data. However, reliance on structured grids restricts the application of convolutional layers to simple geometries. Future work will address this limitation by integrating mesh-informed neural networks, as proposed in \cite{FrancoMINN}, to generalize U-net architectures while retaining their feature extraction capabilities.
\begin{figure}[h!]
    \centering
    \includegraphics[width=0.9\linewidth]{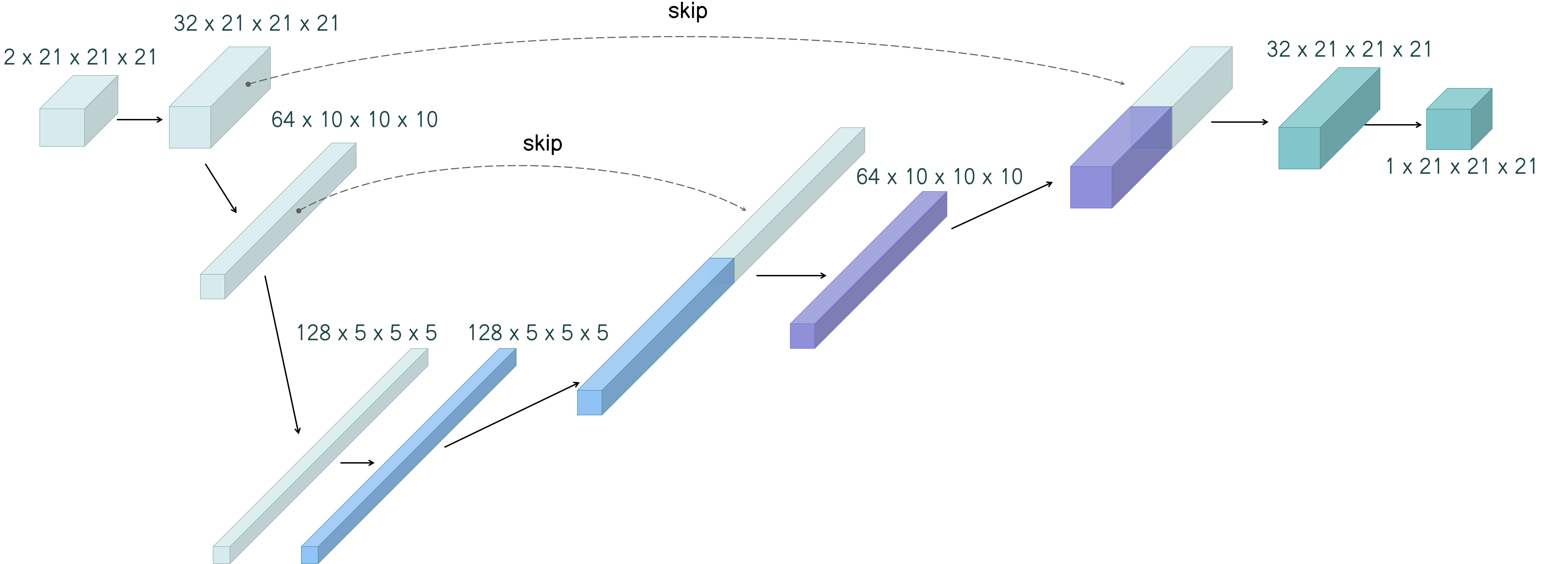}
    \caption{\small Schematic representation of the U-Net architecture $\mathcal{U}_3$. Tensor data are represented by blocks, where the number of channels corresponds to the depth of the blocks. The tensor shape, $c \times n_1 \times n_2 \times n_3$, is reported on the top of the blocks. Solid line arrows represent layer action, while dashed arrows represent channel-staking skip connection.}
    \label{fig:unet-arch}
\end{figure}

\begin{table}[htpb!]
    \centering
    \begin{adjustbox}{width=\textwidth}
    
    \begin{tabular}{lccccc}
    \hline
        \textbf{Layer type} & \textbf{input size} & \textbf{output size} & \textbf{input channels} & \textbf{output channels} & \textbf{\# parameters $\theta$} \\ \hline 
        \textbf{Conv.} & $21^3$ & $21^3$ & 2 & 32 & 13,840 ~ \\ 
        \textbf{Conv.+Max P.} & $21^3$ & $10^3$ & 32 & 64 & 55,296 ~ \\ 
        \textbf{Conv.+Max P.} & $10^3$ & $5^3$ & 64 & 128 & 221,184 ~ \\ 
        \textbf{Conv.} & $5^3$ & $5^3$ & 128 & 128 & 442,368 ~ \\ 
        \textbf{Trasp. conv. + Skip} & $5^3$ & $10^3$ & 128 & 64 & 286,784 ~ \\ 
        \textbf{Trasp. conv. + Skip} & $10^3$ & $21^3$ & 64 & 32 & 71,912 ~ \\
        \textbf{Conv.} & $21^3$ & $21^3$ & 32 & 1 & 545 ~ \\  
        \hline \hline
    \end{tabular}
\end{adjustbox}
\caption{\small \small Summary of the $\mathcal{U}_3$ architecture. Input data flows from the top row to the bottom row; see also the scheme in Fig.\ref{fig:unet-arch}. The total number of trainable parameters is $\theta \approx 10^6$.}\label{tab:unet}
\end{table}

\section{\color{black}Numerical solution of mixed-dimensional PDEs using the neural preconditioner}
We analyze how the neural preconditioner accelerates the convergence of iterative solvers, in the case of mixed-dimensional PDEs. We discuss the computational setup and analyze the preconditioner's impact on FGMRES convergence rate, mesh size scalability, and computational time. 
{\color{black}
In doing so, we shall take the opportunity to assess the importance of the several design choices entailed by the implementation of the neural preconditioner, such as, e.g., the data-augmentation strategy and the parametrization of the 1D geometry.
For the sake of this preliminary exploration, in this Section we shall restrict ourselves to numerical solution of problem \eqref{eq:matrixC}, that is, to the 3D block of the mixed-dimensional system, thus ignoring the 3D-1D coupling. The latter will be addressed right after, in Section~\ref{sec:coupled}.

We anticipate that the main message emerging from this numerical exploration is that the neural preconditioner proves competitive when compared to other state-of-the-art algorithms ---such as AMG or ILU---  thanks to a reduction in the computational cost of each preconditioning step. However, the iteration count and the scalability of the neural preconditioner with respect to the mesh size are not optimal. In fact, as shown later in Table~\ref{tab:iteration_and_time_FULL}, a trade-off between computational cost and algorithmic efficiency appears. However, our findings indicate that, in general, the neural preconditioner offers a competitive approach to solving mixed-dimensional PDEs, which motivates its use in Section~\ref{sec:coupled} when addressing the coupled problem.}

\subsection{General setup of the numerical tests}
The domain $\Omega$ is the unit cube and the problem is discretized using a structured mesh with $9261$ nodes ($21^3$ grid points). The physical constants are set as $k_\Omega = 10^{-3}$, $\sigma_\Omega=10^{-3}$ and $\epsilon=10^{-3}$.
\begin{figure}
    \centering
    \includegraphics[width=0.9\linewidth]{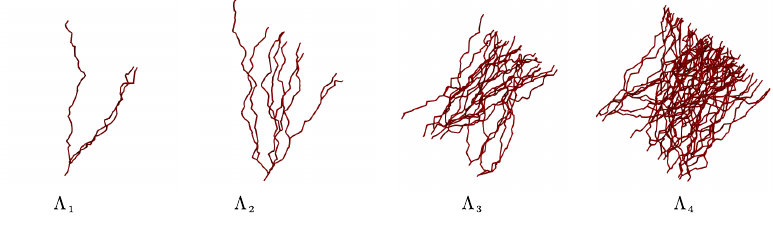}
    \caption{\small  Examples of the graphs $\Lambda$ considered in the numerical tests, with increasing geometrical complexity.}
    \label{fig:graphs}
\end{figure}
In addition, we performed tests using $k_\Omega=10^{-2},\, \sigma_\Omega=10^{-2}$, confirming the adaptability of the method. For the training set $\mathcal{K}^\mu$, different 1D graph geometries are considered (see Fig.~\ref{fig:graphs}), ranging from $O(1)$ to $O(10^2)$ number of branches. 

We considered a three-level U-net architecture, $\mathcal{U}_3$, with two input channels: one relative to the vector in the training set $\mathcal{K}^\mu$ and one carrying the parameter information $\mu$, that is, taking the discrete distance function $\mathbf{d}_h^\mu \in \mathbb{R}^{N_h}$ associated with the graph $\Lambda$. 
More technical details can be found in Table~\ref{tab:unet} and Fig.~\ref{fig:unet-arch}. 
The actual training set $\tilde{\mathcal{B}} \approx \mathcal{B}$ is obtained by solving approximately the system in \eqref{eq:setB}, with a relative tolerance of $10^{-4}$, thus ensuring computational efficiency in training set generation. The data-augmentations sets $\mathcal{D}^{\mu_j}$ vary depending on the scenario analyzed, resulting in three different U-Net preconditioners:
\begin{equation*}
    \begin{aligned}
        \mathcal{D}^{\mu_j} = \emptyset &\quad \Rightarrow \quad \mathcal{U}_3^\emptyset (\cdot\;; \theta), \\
        \mathcal{D}^{\mu_j} = \{q_5^{\mu_j}, \ldots, q_8^{\mu_j}\} &\quad \Rightarrow \quad \mathcal{U}_3^{kry}(\cdot\;; \theta), 
        \\
        \mathcal{D}^{\mu_j} = \{r_{h,1}, \ldots, r_{h,4}\} &\quad \Rightarrow \quad \mathcal{U}_3^{hf} (\cdot\;; \theta).
    \end{aligned}
\end{equation*}

Here, $\{q_5^{\mu_j}, \ldots, q_8^{\mu_j}\}$ are orthonormal bases of the Krylov subspace $\mathcal{K}_8(C_h^\mu, b_h^\mu)$, and $\{r_{h,1}, \ldots, r_{h,4}\}$ are high-frequency vectors sampled from the unit sphere $\unitsphere$.
The networks $\mathcal{U}_3$ are trained through the minimization of the empirical risk:
\begin{equation*}
    \mathcal{U}_3^\star =  \mathcal{U}_3(\theta^\star) 
    \ \textnormal{with} \
    \theta^\star = \arg\min_\theta =\frac{1}{N_{\mathcal{P}}}\sum_{j=1}^{N_{\mathcal{P}}}\frac{1}{N_{\mathcal{K}_{\mu_j}}}\sum_{\mathbf{v}\in \mathcal{K}_{\mu_j}} \left\| \mathbf{v} - \alpha^{-1} C_h^{\mu_j} \mathcal{U}_3(\mathbf{v}, \mathbf{d}_h^{\mu_j}; \theta) \right\|^2,
\end{equation*}
where $\alpha:= \left(\frac{1}{N_{\mathcal{P}}}\sum_{j=1}^{N_{\mathcal{P}}}\frac{1}{\sqrt{N_h}}\|diag(C_h^{\mu_j})\|_2\right)$ is a fixed normalizing constant representing the mean matrices diagonal root mean square,  which helps stabilize the training process. For comparison purposes, an equal number of graph samples and training epochs is used for $\mathcal{U}_3^\emptyset$, $\mathcal{U}_3^{kry}$, and $\mathcal{U}_3^{hf}$. More data on the training process are reported in Table~\ref{table:training_data}.

\begin{table}[h!]
    \centering
    \begin{tabular}{l c c | l c c }
    \hline
        \textbf{Training} & \textbf{1D graphs}& \textbf{Tot. samples}& \textbf{Validation}           & \textbf{1D graphs}& \textbf{Tot. samples}\\ \hline
        Samples& 480              & 2400                 &Samples& 120              & 600  \\ \hline 
        Batch Size                     &  & 5                  & Epochs &                       & 250                \\ 
        Learn.  rate& & \(10^{-3}\)                & Learn. strategy        & & decaying   \\ \hline
    \end{tabular}
    \caption{\small \small Details on the training dataset and parameters. Due to data augmentation strategies, the total number of training samples is five times the one of the 1D graphs.}
    \label{table:training_data}
\end{table}

Figure~\ref{fig:traintest} displays the convergence history of training and validation for the U-Net architecture when the physical constants $k_\Omega = 10^{-3}$, $\sigma_\Omega=10^{-3}$ are considered. 
The graphs illustrate that the training and validation error for $\mathcal{U}_3^\emptyset$, $\mathcal{U}_3^{kry}$ and $\mathcal{U}_3^{hf}$ possesses similar trends but quite different values; nevertheless, as will be clarified later on, the training and validation error alone are not discriminant for the performance of the U-Net preconditioner. What is fundamental is the data set over which the given error is obtained; e.g., even if $\mathcal{U}_3^\emptyset$ and $\mathcal{U}_3^{kry}$ possess similar error behavior in the validation set, their performance as a preconditioner is quite different (cf. Table~\ref{tab:precond_comparison}).
\begin{figure}[h]
    \centering
    \includegraphics[width=0.85\linewidth]{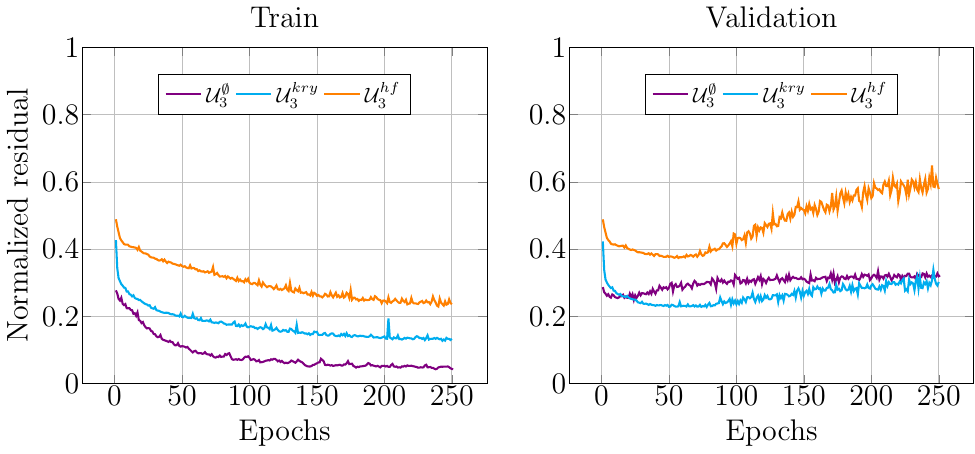}
    \caption{\small \small Convergence of train and test relative errors of the U-Net $\mathcal{U}_3^\star$ when trained on different datasets, for the physical constants $k_\Omega = 10^{-3}$, $\sigma_\Omega=10^{-3}$.}
    \label{fig:traintest}
\end{figure}

The performance of the proposed neural preconditioner is evaluated by comparing it with two well-established preconditioning strategies: the Incomplete LU (ILU) factorization and the Algebraic Multigrid (AMG) method \cite{saad2003iterative}. The ILU preconditioner approximates the LU factorization of a matrix by allowing a controlled level of fill-in, which provides a balance between sparsity and approximation accuracy, making it effective for moderately ill-conditioned systems. AMG is a hierarchical method that constructs a sequence of progressively coarser spaces to efficiently capture both low- and high-frequency error components, thereby accelerating convergence for a wide range of linear systems. Both ILU and AMG preconditioners have been implemented using the PETSc library (\url{https://petsc.org/release/})\cite{balay2023petsc}, which offers efficient and scalable linear algebra routines for large-scale computational problems, together with the PETSc python wrapper \texttt{petsc4py}(\url{https://pypi.org/project/petsc4py})\cite{dalcin2011parallel}.

{\color{black} All these preconditioners are tested on a flexible GMRES algorithm with right preconditioning, designated as $\text{FGMRES}(k)$,  restarted every $k=20$ steps. Iterations are stopped when the relative residual satisfies $||r||/||r_0|| \leq 10^{-6}$. In all tables, we show the average number of iterations required by the algorithm to converge (denoted as \textbf{Mean Iter.}), for an ensemble of linear systems corresponding to $10^2$ unseen graphs $\Lambda$, ranging from $O(1)$ to $O(10^2)$ branches.}



\subsection{Effect of the data-augmentation strategy}
We analyze the performance of the proposed preconditioners under three augmentation strategies: (i) no augmentation ($\mathcal{U}_3^\emptyset$), (ii) Krylov subspace augmentation ($\mathcal{U}_3^{kry}$) and (iii) random high-frequency vector augmentation ($\mathcal{U}_3^{hf}$). {\color{black} To explore the robustness of the approach, the tests are repeated for two values of the physical parameters. The results are reported in Table~\ref{tab:precond_comparison}.}

{\color{black} We discuss here the results for the most challenging case, $k_\Omega=\sigma_\Omega = 10^{-3}$.} The unaugmented U-Net preconditioner ($\mathcal{U}_3^\emptyset$) is less effective, requiring 92.41 iterations on average, indicating its limited ability to capture the full spectrum of the solution space. Incorporating Krylov subspace augmentation ($\mathcal{U}_3^{kry}$) reduces iterations significantly (24.54), as this strategy enhances the preconditioner's ability to handle higher-order components of the operator. Random vector augmentation ($\mathcal{U}_3^{hf}$) achieves the lowest mean iteration count (17.82), demonstrating superior performance over other versions of the U-net preconditioner. 

\begin{table}[h!]
    \centering
    \setlength{\tabcolsep}{5pt}
    \renewcommand{\arraystretch}{1}
    \begin{tabular}{lcccc|ccc}
    \hline 
        \textbf{Preconditioner} & \textbf{Type} & \multicolumn{3}{c}{$k_\Omega=\sigma_\Omega = 10^{-3}$} & \multicolumn{3}{c}{$k_\Omega=\sigma_\Omega = 10^{-2}$} \rule{0pt}{16pt} \vspace{5pt}\\        
        \hline
         &  & \textbf{Mean Iter.} & $\Delta^+$ & $\Delta^-$ & \textbf{Mean Iter.}& $\Delta^+$ & $\Delta^-$ \rule{0pt}{12pt}\\        
        \hline
        None & \_ & 147.72 & +83.28 & -52.72 & 187.04 & +121.96 & -70.04 \rule{0pt}{12pt}\\
        $\mathcal{U}_3^\emptyset$ & No Augmentation & 92.41 & +17.59 & -16.41 & 42.13 & +10.87 & -6.13 \rule{0pt}{12pt}\\
        $\mathcal{U}_3^{kry}$ & Krylov Subspace & 24.54 & +3.89 & -4.11 & 15.54 & +1.46 & -2.54 \rule{0pt}{12pt}\\
        $\mathcal{U}_3^{hf}$ & Random Vectors & 17.82 & +5.18 & -2.82 & 16.63 & +2.37 & -1.63 \rule{0pt}{12pt}\\ 
        \hline       
    \end{tabular}
    \caption{\small \small Mean FGMRES iterations for different preconditioners. The variability of the data corresponds to 100 test configurations.}
    \label{tab:precond_comparison}
\end{table}

\subsection{Impact of the 1D graph parameterization}
We also investigated to what extent including the distance function $\mathbf{d}_h^\mu$ as input enhances the effectiveness of the neural preconditioner $\mathcal{U}(\cdot, \mathbf{d}_h^\mu; \theta)$ for different instances of 1D domains. As shown in Table~\ref{tab:precond_comparison_distfun}, including the distance function $\mathbf{d}_h^\mu$ as an input significantly improves its performance, particularly in scenarios where the $\mu$-dependent component $M_{h,00}^\mu$ dominates. In fact, we recall that the structure of the linear system is defined as \(C_h^\mu := K_{h,00} + 2\pi\epsilon M_{h,00}^\mu\), where $K_{h,00}$ is the fixed component, and $M_{h,00}^\mu$ introduces the dependence of parameters. Without providing $\mathbf{d}_h^\mu$, the preconditioner mainly targets $K_{h,00}$, leading to lower performance on the whole system $C_h^\mu$. 

\begin{table}[h]
\centering
\setlength{\tabcolsep}{5pt}
\renewcommand{\arraystretch}{0.85}
\begin{tabular}{lccc|ccc}

\hline
\textbf{Preconditioner} & \multicolumn{3}{c}{$k_\Omega=\sigma_\Omega = 10^{-3}$} & \multicolumn{3}{c}{$k_\Omega=\sigma_\Omega = 10^{-2}$}  \rule{0pt}{16pt} \vspace{5pt}\\ 
\hline
 & \textbf{Mean Iter.} & $\Delta^+$ & $\Delta^-$ & \textbf{Mean Iter.} & $\Delta^+$ & $\Delta^-$ \rule{0pt}{12pt}\\ 
\hline
$\mathcal{U}_3^\emptyset(\cdot)$  & 108.65 & +30.35 & -29.65 & 41.50  & +8.50  & -5.50 \rule{0pt}{14pt}\\
$\mathcal{U}_3^\emptyset(\cdot, \mathbf{d}_h^\mu)$  & 92.41 & +17.59 & -16.41 & 42.13 & +10.87 & -6.13 \rule{0pt}{14pt}\\ 
$\mathcal{U}_3^{kry}(\cdot)$  & 44.91 & +8.09 & -7.91 & 20.69  & +2.31  & -3.69 \rule{0pt}{14pt}\\
$\mathcal{U}_3^{kry}(\cdot, \mathbf{d}_h^\mu)$  & 24.54 & +3.89 & -4.11  & 15.54 & +1.46 & -2.54 \rule{0pt}{14pt}\\ 
$\mathcal{U}_3^{hf}(\cdot)$  & 60.31 & +26.69 & -19.31 & 24.98  & +6.02  & -4.98
\rule{0pt}{14pt}\\
$\mathcal{U}_3^{hf}(\cdot, \mathbf{d}_h^\mu)$  & 17.82 & +5.18 & -2.82 & 16.63 & +2.37 & -1.63 \rule{0pt}{14pt} \vspace{5pt}\\
\hline
\end{tabular}
\caption{\small \small Mean FGMRES iterations with and without influence of the distance function.}
\label{tab:precond_comparison_distfun}
\end{table}

\subsection{Effect of pre- and post-smoothing}\label{sec:smoothing}
To improve the efficacy of the preconditioning method, similar to other studies \cite{Azulay2023S127,doi:10.1137/24M162861X}, we can employ techniques for pre- and post-smoothing. The smoothing procedure helps mitigate high-frequency error components, particularly for preconditioners that lack high-frequency training data. For a given residual $\mathbf{r}$ and initial guess $\mathbf{x}_0$, we consider a smoothing algorithm with Jacobi relaxation $\operatorname{S_J}(A, \mathbf{r}, \mathbf{x}_0, \text{maxit})$ \cite{trottenberg2000multigrid}.
The impact of pre-post smoothing is summarized in Table~\ref{tab:smoothing_effect}. The results show that preconditioning strategies with low-frequency training data ($\mathcal{U}_3^\emptyset$) benefit the most from smoothing, reducing iterations by 60\%.

{\color{black} This test sheds light on an important property of the data-augmented training strategy. Essentially, in Table~\ref{tab:smoothing_effect} we observe that pre- and post-smoothing has an effect comparable to that of data augmentation. This reveals that the data-augmentation strategy is equivalent to adding a pre- and post-smoother to the preconditioner, helping in learning how to process high frequencies and contrast the spectral bias. This property may turn out to be particularly effective in those cases where an efficient smoother is not known for the problem at hand. This conclusion is further supported by the fact that the high-frequency data augmentation strategy $\mathcal{U}_3^{hf}$ is less sensitive to smoother application.}

\begin{table}[h]
\centering
\setlength{\tabcolsep}{5pt}
\renewcommand{\arraystretch}{0.85}
\begin{tabular}{c c  c c c c | c c c c}
\hline
\textbf{Precond.} & \textbf{Relax} & \multicolumn{4}{c}{$k_\Omega=\sigma_\Omega = 10^{-3}$} & \multicolumn{4}{c}{$k_\Omega=\sigma_\Omega = 10^{-2}$} \rule{0pt}{16pt} \vspace{5pt} \\
\hline
 & \shortstack{\vspace{0.55cm}\;\\\;} & \shortstack{\textbf{Mean}\\\textbf{Iter.}} & $\Delta^+$ & $\Delta^-$ & \shortstack{\textbf{Rel.}\\\textbf{Gain.}} & \shortstack{\textbf{Mean}\\\textbf{Iter.}} & $\Delta^+$ & $\Delta^-$ & \shortstack{\textbf{Rel.}\\\textbf{Gain}} \rule{0pt}{14pt}\\ 
\hline
$\mathcal{U}_3^\emptyset$ & None & 92.41 & +17.59 & -16.41 &\multirow{2}{*}{0.64} & 42.13 & +10.87 & -6.13 &\multirow{2}{*}{0.47} \rule{0pt}{14pt}\\
$\mathcal{U}_3^\emptyset$ & Jacobi & 33.72 & +13.28 & -7.72 & & 22.47 & +3.53 & -4.47 & \rule{0pt}{14pt}\\ 

$\mathcal{U}_3^{kry}$ & None & 24.54 & +3.89 & -4.11 &\multirow{2}{*}{0.49} & 15.54 & +1.46 & -2.54 & \multirow{2}{*}{0.26} \rule{0pt}{14pt}\\ 
$\mathcal{U}_3^{kry}$ & Jacobi & 12.6 & +3.40 & -2.60 & & 11.46 & +0.54 & -1.46 &  \rule{0pt}{14pt}\\ 

$\mathcal{U}_3^{hf}$  & None & 17.83 & +5.17 & -2.83 &\multirow{2}{*}{0.26}& 16.63 & +2.37 & -1.63 & \multirow{2}{*}{0.22} \rule{0pt}{14pt} \rule{0pt}{14pt} \\
$\mathcal{U}_3^{hf}$  & Jacobi & 13.12 & +2.88 & -2.12 & &12.96 & +2.04 & -0.96  \rule{0pt}{14pt} \vspace{5pt}\\
\hline
\end{tabular}
\caption{\small \small Effect of pre- and post-smoothing on mean FGMRES iterations.}
\label{tab:smoothing_effect}
\end{table}

\subsection{Spectral analysis of training data and operator kernel}
As previously observed, the introduction of high-frequency vectors in the training set is beneficial, with the augmentation strategy based on random vectors slightly outperforming that using the {Kylov} subspace.  In this section, we seek to explain the observed behavior by contrasting the two training-set enrichment techniques with the operator's intrinsic kernel structure. In detail, to quantify the operator spectral content, we analyzed the null spaces of the problem matrices $K_{h,00}$, $2\pi\epsilon M_{00,h}^\mu$, and $C_h^\mu$ using Singular Value Decomposition (SVD).  Given a matrix \( A \in \mathbb{R}^{m \times n} \) with rank \( r \), the associated SVD is given by \(A = U \Sigma V^\top\), where \( U = [\mathbf{u}_1 \; | \; \mathbf{u}_2 \; | \; \ldots \; | \; \mathbf{u}_r] \) and \( V = [\mathbf{v}_1 \; | \; \mathbf{v}_2 \; | \; \ldots \; | \; \mathbf{v}_r] \) contain the left and right singular vectors, respectively. The right singular vectors $\mathbf{v}_i$ corresponding to zero singular values (\(\sigma = 0\)) span the null space, capturing kernel components that influence preconditioning performance. Using sparse SVD algorithms (\texttt{scipy.sparse.svds}), we extracted some of these
vectors and analyzed them in frequency space via the multidimensional discrete Fourier transform.
Let us denote by $\mathbf{V}$ the $d$-dimensional tensor corresponding to a right singular vector $\mathbf{v}$. Then we denote the DFT as $\mathcal{F}$, defined as follows,
\begin{equation*}
\mathcal{F}:  \mathbf{V} \in \mathbb{R}_{n_1} \times ... \times \mathbb{R}_{n_d}  \rightarrow  \mathcal{F}(\mathbf{V}) \in \mathbb{C}_{k_1}  \times ... \times \mathbb{C}_{k_d},\ \quad \ \mathcal{F}(\mathbf{V})|_{\mathbf{k}}=\sum_{\mathbf{n}=\mathbf{0}}^{\mathbf{N}-1} e^{-i 2 \pi \mathbf{k} \cdot(\mathbf{n} / N)} \mathbf{V}|_{\mathbf{n}},
\end{equation*}
where  ${\bf n}=(n_1,n_2, ..., n_d) \in \mathbb{N}^d$,  ${\bf k}=(k_1,k_2, ..., k_d)\in \mathbb{N}^d$ and $\mathbf{N}=(N,N, ..., N)\in \mathbb{N}^d$ are multi-integers and is $N$ the number of edge subdivision for the considered cubic structured mesh. 

\begin{figure}[h]
    \centering
    \includegraphics[width=\linewidth]{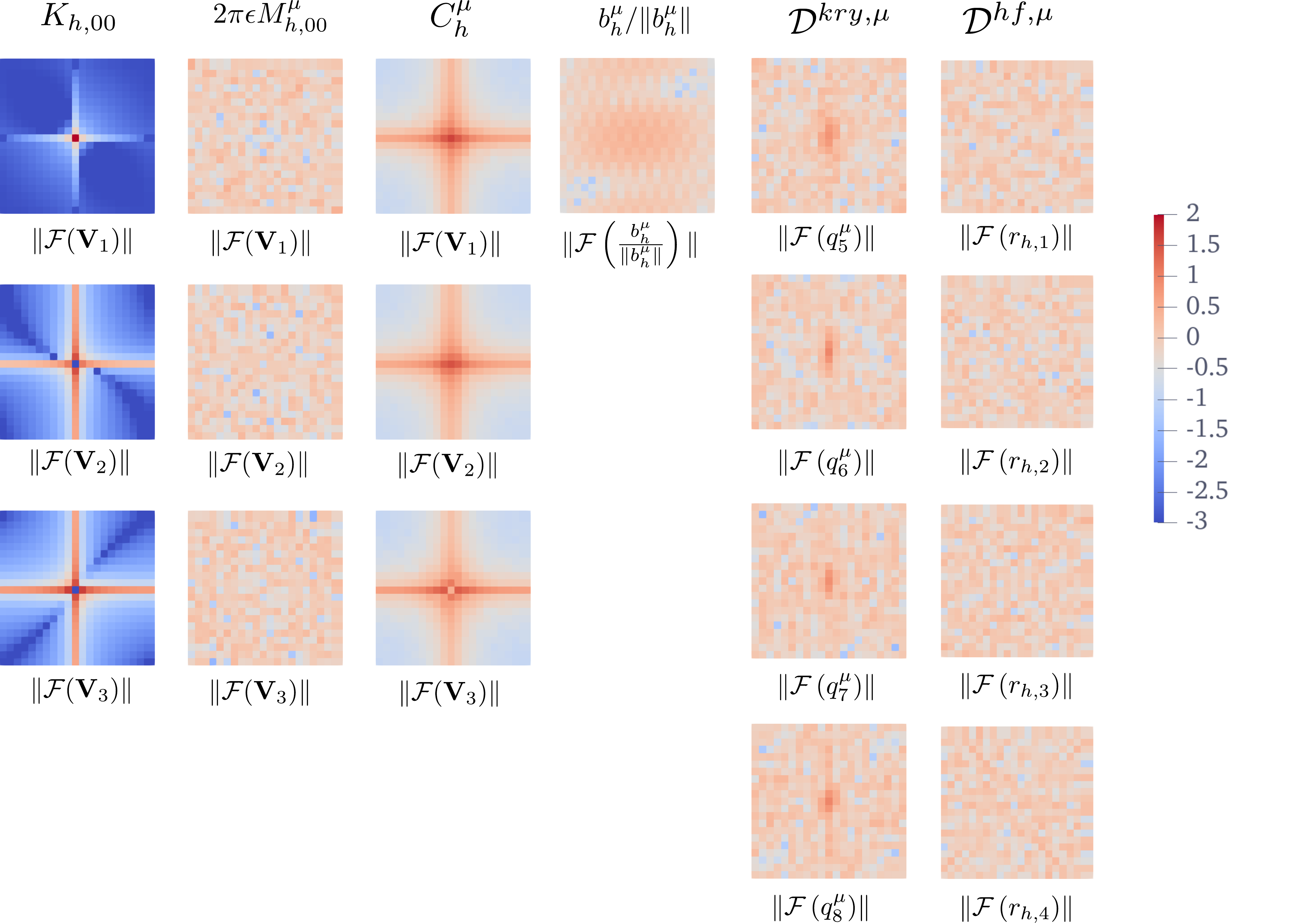}
    \caption{\small \small Logarithmic plot in the frequency domain $(k_1, k_2, k_3)$ of the $k_2k_3$-cross section for the spectrum magnitude $\|\mathcal{F}(\mathbf{V}_i)\|$; $k_1=0$ is considered and a value for $\mu$ is fixed given $k_\Omega=\sigma_\Omega =10^{-3}$. Frequency components increase with the distance from the center $(0, 0, 0)$. \textbf{Column I:} magnitude of first three right-singular vectors associated to $K_{h,00}$; \textbf{Column II:} magnitude of first three right-singular vectors associated to $2\pi\epsilon M_{h,00}^{\bar{\mu}}$; \textbf{Column III:} magnitude of first three right-singular vectors associated to $C_h^\mu=K_{h,00}+2\pi\epsilon M_{h,00}^{\bar{\mu}}$; \textbf{Column IV:} spectrum magnitude of the normalized forcing term $b_h^{\mu}$; \textbf{Column V:} spectrum magnitude of the orthonormal Krylov bases $\mathcal{D}^{kry, \mu}:=\{q_5^{\mu}, \ldots, q_8^{\mu}\}$ (note that $q_0^{\mu} \equiv b_h^{\bar{\mu}}/||b_h^{\bar{\mu}}||$); \textbf{Column VI:} spectrum magnitude of the random high frequency data augmentation set $\mathcal{D}^{hf, \mu}:=\{r_{h,1} \ldots r_{h,4}\}$.  
    }
    \label{fig:spectrum}
\end{figure}


In Figure~\ref{fig:spectrum} (columns I-III), two-dimensional slices of the spectrum magnitudes $\|\mathcal{F}(\mathbf{V}_i)\|$ associated with the kernel vectors $\{\mathbf{v}_i\}_{i=1,2,3}$ of matrices $K_{h,00}$, $ 2\pi\epsilon M_{h,00}^\mu$ and $C_h^\mu$ are shown.  For better visualization, the so-called \textit{ centered} spectrum is considered, where a suitable phase shift $e^{i \pi {\bf k}(N-1)}$ in the frequency space is applied to $\mathcal{F}(\mathbf{V})$; in this way, the point $\mathcal{F}(\mathbf{V})|_{(0,...,0)}$, is centered on the plot. We note that the frequency content in the spectra of matrices $K_{h,00}$ and $ 2\pi\epsilon M_{h,00}^\mu$ is remarkably different. As expected, the kernel of the metric term $ 2\pi\epsilon M_{h,00}^\mu$ introduces very noisy high-frequency components that need to be preconditioned.
Figure~\ref{fig:spectrum} (columns IV-VI) presents a comparison between the different data enhancement strategies, visualized in the frequency domain. Column IV shows the spectrum magnitude of the normalized forcing term. Column V illustrates the spectrum magnitude of the orthonormal Krylov basis. Finally, the last column presents the spectrum magnitude of the random high-frequency data augmentation set.
In Figure~\ref{fig:spectrum} we observe that Krylov subspace augmentation captures higher-order components by repeated application of the operator to a basis vector, producing structured high-frequency content (column V). 
In contrast, random vector augmentation introduces unstructured high-frequency content by randomly drawing components in an independent manner.
This results in a broader and more uniform spectral coverage (Fig.~\ref{fig:spectrum}, column VI), enabling the preconditioner to handle fine-scale variations in the solution, critical for mixed-dimensional problems like the 3D-1D coupled system.
From a comparison between the spectral content of different training sets and the spectrum of the kernel component of the metric term $2\pi\epsilon M_{h,00}^\mu$ (Fig.~\ref{fig:spectrum}, column II), one can conclude that the Krylov subspace augmentation introduces structured high-frequency components, while random vector augmentation spans a broader frequency range, closer to the true kernel spectrum of the operator. This broader frequency coverage can explain the better convergence behavior observed in numerical experiments, where random vector-based augmentation results are more effective than Krylov-based strategies in handling ill-conditioning and diverse parameter variations.

\subsection{Scalability with respect to the problem size}
Due to the intrinsic properties of the convolutional layers in the U-Net architecture, although trained on a low-resolution input tensor ${\bf X} \in \mathbb{R}^{21 \times 21 \times 21 \times 2}$, the proposed neural preconditioner $\mathcal{U}_3^{hf}$ can be applied to inputs of arbitrary dimensions, corresponding to higher resolutions of the problem. This adaptability makes it suitable for evaluating its scalability with respect to increasing problem size, quantified by the number of unknowns $N_h$ of the discrete problem. 

{\color{black} We test here the convergence rate of the preconditioned FGMRES for the solution of the 3D problem, for three different levels of mesh resolution $N$, namely $[21,41,81]$ points per side of the cubic domain $\Omega$ corresponding respectively to a number of degrees of freedom (d.o.f.), $N_h$, for a $\mathbb{P}^1$ FEM discretization equal to $[21^3;41^3;81^3]$. We measure the average number of iterations across 100 different test cases, all referring to 1D graphs that were not used during the training phase. Then, we analyze how the average iteration count scales with the number of d.o.f., $N_h$.

To assess the capacity of the neural preconditioner to generalize on the mesh resolution, we compare the neural preconditioner $\mathcal{U}_3^{hf}$ trained on two different resolutions, $N=21$ and $N=41$, respectively, and analyze how these two operators behave in the case of discretization based on $81$ points. The comparison also includes incomplete LU factorization (ILU) and Algebraic Multigrid (AMG) with 3 and 10 V-cycles, respectively.}\\

\begin{table}[htpb]
    \centering
    \setlength{\tabcolsep}{5pt}
    \renewcommand{\arraystretch}{1}
    \begin{tabular}{lll|cccccc} \hline
        \textbf{Precond.} & $N_h$ & $N$ & \textbf{Mean Iter.} & \textbf{Rate} ($N_h$) & \textbf{Time} & \textbf{Rate} ($N_h$) & \textbf{Time/Iter} & \textbf{Rate} ($N_h$) \\
        \hline 
         &9,261 & 21 & 147.72 & \_ & 365.33 & \_ & \_ & \_ \\ 
        \textbf{None}
         &68,921 & 41 & 431.91 & 0.53 & 7293.49 & 1.49 & \_ & \_ \\
         &531,441 & 81 & 1000* & \_ & \_ & \_ & \_ & \_ \\
        \hline
         
         &9,261 & 21 & 17.38 & \_ & 45.227 & \_ & 2.54 & \_ \\
        $\mathcal{U}_3^{hf}(21^3)$          
         &68,921 & 41 & 37.24 & 0.37 & 497.86 & 1.2 & 13.37 & 1.21 \\
         &531,441 & 81 & 100.73 & 0.49 & 9212.77 & 1.43 & 91.46 & 1.06 \\
        \hline

         &9,261 & 21 & 28.81 & \_ & 60.54 & \_ & 2.10 & \_ \\
        $\mathcal{U}_3^{hf}(41^3)$          
         &68,921 & 41 & 45.54 & 0.27 & 519.95 & 1.07 & 11.42 & 1.19 \\
         &531,441 & 81 & 119.27 & 0.47 & 11985.8 & 1.54 & 100.49 & 0.94 \\
        \hline

         &9,261 & 21 & 24.73 & \_ & 37.7 & \_ & 1.52 & \_ \\
        \textbf{ILU}          
         &68,921 & 41 & 46.2 & 0.31 & 486.04 & 1.27 & 10.52 & 1.04 \\
         &531,441 & 81 & 102.55 & 0.39 & 9422.78 & 1.45 & 91.88 & 0.94 \\
        \hline

         &9,261 & 21 & 9.56 & \_ & 62.35 & \_ & 6.52 & \_ \\
        \textbf{AMG}(3)          
         &68,921 & 41 & 17.13 & 0.29 & 696.14 & 1.20 & 40.64 & 1.10 \\
         &531,441 & 81 & 33.66 & 0.33 & 9631.04 & 1.29 & 286.13 & 1.05 \\
        \hline
         &9,261 & 21 & 3.64 & \_ & 55.67 & \_ & 15.29 & \_ \\
        \textbf{AMG}(10)          
         &68,921 & 41 & 4.01 & 0.05 & 329.32 & 1.02 & 82.12 & 1.19 \\
         &531,441 & 81 & 4.9 & 0.10 & 3921.46 & 1.07 & 800.30 & 0.90 \\
        \hline
    \end{tabular}
    \caption{\small \small Mean FGMRES iterations and execution time for different preconditioning strategies of the problem with $k_\Omega=\sigma_\Omega = 10^{-3}$. All times are in milliseconds. (*) Denotes that FGMRES did not converge in the maximum number of iterations.}
    \label{tab:merged_table_1}
\end{table}

    \bigskip
\begin{table}[htpb]
    \centering
    \setlength{\tabcolsep}{5pt}
    \renewcommand{\arraystretch}{1}
    \begin{tabular}{lll|cccccc}\hline
    \textbf{Precond.} & $N_h$ & $N$ & \textbf{Mean Iter.} & \textbf{Rate} ($N_h$) & \textbf{Time} & \textbf{Rate} ($N_h$) & \textbf{Time/Iter} & \textbf{Rate} ($N_h$) \\
    \hline
         & 9,261 & 21 & 187.94 & \_ & 216.17 & \_ & \_ & \_ \\
        \textbf{None}
         & 68,921 & 41 & 511.34 & 0.50 & 3,329.16 & 1.36 & \_ & \_ \\
         & 531,441 & 81 & 1000* & \_ & \_ & \_ & \_ & \_ \\
        \hline
         & 9,261 & 21 & 16.63 & \_ & 41.18 & \_ & 2.48 & \_ \\
        $\mathcal{U}_3^{hf}(21^3)$
         & 68,921 & 41 & 37.98 & 0.41 & 461.18 & 1.20 & 12.14 & 1.26 \\
         & 531,441 & 81 & 113.98 & 0.54 & 9,908.77 & 1.50 & 86.93 & 1.04 \\
        \hline
         & 9,261 & 21 & 24.27 & \_ & 55.73 & \_ & 2.30 & \_ \\
        $\mathcal{U}_3^{hf}(41^3)$
         & 68,921 & 41 & 41.39 & 0.27 & 458.34 & 1.05 & 11.07 & 1.28 \\
         & 531,441 & 81 & 104.99 & 0.46 & 9,586.00 & 1.49 & 91.30 & 0.97 \\
        \hline
         & 9,261 & 21 & 31.46 & \_ & 42.09 & \_ & 1.34 & \_ \\
        \textbf{ILU}
         & 68,921 & 41 & 62.18 & 0.34 & 548.30 & 1.28 & 8.82 & 1.06 \\
         & 531,441 & 81 & 147.30 & 0.42 & 12,123.32 & 1.52 & 82.30 & 0.91 \\
        \hline
         & 9,261 & 21 & 11.50 & \_ & 63.95 & \_ & 5.56 & \_ \\
        \textbf{AMG}(3)
         & 68,921 & 41 & 20.69 & 0.29 & 727.50 & 1.21 & 35.16 & 1.09 \\
         & 531,441 & 81 & 42.14 & 0.35 & 10,839.18 & 1.32 & 257.22 & 1.03 \\
        \hline
         & 9,261 & 21 & 3.92 & \_ & 53.25 & \_ & 13.58 & \_ \\
        \textbf{AMG}(10)
         & 68,921 & 41 & 4.00 & 0.01 & 404.91 & 1.01 & 101.23 & 0.999 \\
         & 531,441 & 81 & 4.00 & 0.00 & 3,450.91 & 1.05 & 862.73 & 0.95 \\
        \hline
    \end{tabular}

    \caption{\small \small Mean FGMRES iterations and execution time for different preconditioning strategies of the problem with $k_\Omega=\sigma_\Omega = 10^{-2}$. All times are in milliseconds. (*) Denotes that FGMRES did not converge in the maximum number of iterations.}
    \label{tab:merged_table_2}
\end{table}

\noindent{\color{black}
In Table~\ref{tab:merged_table_1} and~\ref{tab:merged_table_2}, we see that the scaling behavior of the neural preconditioner $\mathcal{U}_3^{hf}$ closely matches that of the ILU preconditioner. These methods show a dependence on $N_h$ of order $\simeq 0.4-0.5$.
The AMG(3) preconditioner shows a lower iteration count but features a similar scaling with respect to the size of the problem. None of these methods achieves optimal scaling as the AMG(10) preconditioner, the iteration rates of which are almost independent on $N_h$.

Finally, we note that the performance of the neural preconditioners $\mathcal{U}_3^{hf}(21^3)$ and $\mathcal{U}_3^{hf}(41^3)$ is almost equivalent, indicating that the size and resolution of the training data do not affect the overall performance of the neural operator. This means that it is possible to train the neural preconditioner on low-resolution data and deploy it on higher resolutions, with considerable savings on the computational cost of the offline training phase.

Previous analyses have focused solely on the number of FGMRES iterations as a performance metric. Although the neural preconditioners demonstrated sub-optimal results in terms of convergence rates, it is also essential to consider execution time, as the forward pass through a neural network is computationally lighter than, e.g., a single evaluation of the AMG algorithm. Although the implementation has not been optimized for performance (e.g., no batch-optimized algorithm or GPU acceleration techniques \cite{mittal2019survey}), we also present in Tables~\ref{tab:merged_table_1}-\ref{tab:merged_table_2} the results for execution time. Looking at the scaling with $N_h$ of the mean time required to solve the 3D problem, the advantage of AMG(10) is significantly reduced with respect to the neural preconditioner, ILU, and AMG(3), which provide aligned performances. This can be interpreted by observing that for FGMRES with sparse matrices, the overall computational cost is \( O(m \times N_h) \), where \( m \) is the number of iterations required for convergence. On the one hand, \( m \) is sensitive to the choice of the preconditioner; precisely as shown in Table~\ref{tab:merged_table_1}-\ref{tab:merged_table_2} column \textit{Mean Iter.} we have $m=(N_h)^{p_i}$ which being $p_i\simeq 0.5$ for $\mathcal{U}_3^{hf}$, $p_i\simeq 0.4$ for ILU, $p_i\simeq 0.4$ for AMG(3) and $p_i\simeq 0$ for AMG (10), this being the only case where $m$ is independent of $N_h$. On the other hand, the time per iteration scales linearly with $N_h$, that is, $T=c_iN_h$ for all methods, but the proportionality constant may be radically different, namely $c_i=1.723 10^{-4},\,1.883 10^{-4}$ for neural preconditioners trained on points $21^3$ and $41^3$, $c_i=1.721 10^{-4}$ for ILU, $c_i=5.383 10^{-4}$ for AMG(3) and $c_i= 14.98 10^{-4}$ for AMG(10), with neural and ILU preconditioners showing advantage over AMG. As a result, the following relation between the total time and the size of the problem holds, $\log T = c_i + (p_i+1) \log N_h$, showing that methods such as neural preconditioners and ILU provide a potential advantage for systems of small size.

We finally observe that the neural preconditioner struggles to address large systems, for example, the one arising from a discretization of the 3D domain with 101 points per side, corresponding to $N_h \simeq 10^6$, for which $\mathcal{U}_3^{hf}(21^3)$ requires on average 188.65 iterations to solve the 3D problem.  The behavior does not improve with increasing the dimension of the training set, as this is true for both $\mathcal{U}_3^{hf}(21^3)$ and $\mathcal{U}_3^{hf}(41^3)$. We hypothesize that this is due to the effect of high frequencies that become dominant components when the discrete solution belongs to a high-dimensional space. For this reason, we tested the preconditioner $\mathcal{U}_3^{hf}(21^3)$ with pre- and post-smoothing, as described in Section~\ref{sec:smoothing}, with very encouraging results given by an average number of iterations equal to $[13.12;27.60;62.32;91.55]$ for $N_h=[21^3;41^3;81^3;101^3]$, respectively.
This corresponds to the parameters $p_i=0.31$ for the dependence of the number of iterations on $N_h$ and $c_i=2.302 10^{-4}$ for the proportionality constant between time and iterations. This performance is in line with the ILU preconditioner, actually slightly better.}

{\color{black} 
\subsection{Coupling the neural preconditioner with ensembled calculations} 

In the context of parametric simulations of mixed-dimensional PDEs, solving multiple instances of the underlying linear system for varying configurations of the low-dimensional structure can become computationally prohibitive. To mitigate this, the so-called \textit{ensembled} framework has recently been developed to accelerate the solution of families of linear systems that share a common structure \cite{phipps2017ensemble,LIEGEOIS2020113188}. Ensembled methods consist of stacking multiple coefficient matrices and right-hand side vectors, each corresponding to a different parameter instance, and solving them simultaneously as a batched linear system. This paradigm is especially well-suited for settings such as uncertainty quantification or parametric sweeps, where many problems must be solved independently but share structural similarities. The main advantage of this approach lies in the optimized memory access.

We note that a key feature of the proposed neural preconditioner lies in its ability to generalize across the parameter space and that this property is well suited to ensemble methods, as the neural operator can process a group of inputs at the same time, each representing various parameter settings.



We have implemented a prototype of the \textit{ensembled} calculation using the algorithmic infrastructure developed for the neural preconditioner, based on the \texttt{PyTorch} library (\cite{paszke2019pytorch}) together with the CUDA kernel (\cite{luebke2008cuda}).   
In fact, we recall that the input of the neural preconditioner consists of tensors $\mathbf{X}_i \in \mathbb{R}^{c\times \mathbf{n} }$ with $c$ representing the number of channels and  ${\bf{n}}=n_1 \times n_2 \times ... \times n_d $ a multi-index encoding the physical dimension of the data (see Section~\ref{sec:unet}).  Given a set of tensors $\{\mathbf{X}_i\}_{i=1..N}$, it is possible to stack them along an additional \textit{batch dimension} to obtain a new tensor $\mathbf{X}_{e}=[{ \bf X}_1 |...|{\bf X}_N] \in \mathbb{R}^{N\times c \times {\bf n}}$ that encodes all input tensors to be fed to the neural preconditioner. 

We evaluated the speed-up factor achieved by implementing the neural preconditioner $\mathcal{U}_3^{hf}$ on $N \in [1,100]$ input tensors ${\bf X}_i \in \mathbb{R}^{2\times 21 \times 21 \times 21}$, which correspond 
to $N$ different 1D graphs in eq.~\ref{eq:decoupled_3d_problem}, i.e.
\begin{equation*}
    s=\frac{\sum_{i=1}^N t\left(\mathcal{U}_3^{hf}({\bf X}_i)\right)}{t\left(\mathcal{U}_3^{hf}({\bf X}_e)\right)}
\end{equation*}
with $t\left(\mathcal{U}_3^{hf}(\cdot)\right)$ representing the execution time of the neural preconditioner pass. The numerical tests are performed on an AMD EPYC 7301 CPU and a NVIDIA Tesla V100 GPU with 64 MB of dedicated RAM. 

From the results shown in Table~\ref{tab:batch-speedup_new} and Fig.~\ref{fig:speed-up}, we observe a speed-up around 4.5 when up to 100 input tensors are considered. Saturation occurs as the batched input approaches the memory limit of the hardware. The beneficial acceleration is associated with the contiguous memory structure, coupled with CUDA streams and execution via the PyTorch library. 
We prospect that the synergy between the ensembled approach and the neural preconditioner, along with the use of GPU based architectures, can enable significant gains in solver throughput, especially in multi-query scenarios. 
A comprehensive study encompassing computational architecture, algorithmic design, memory management, and preconditioner implementation is deferred to future research.}

\begin{figure}
    \centering
    \includegraphics[width=0.9\linewidth]{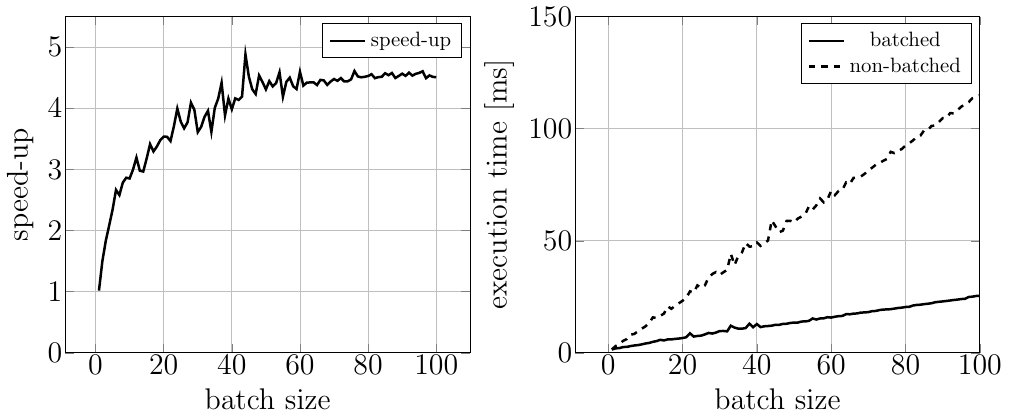}
    \caption{\small \textbf{Left:} plot of the speed-up associated to the batched execution of the neural preconditioning pass, using $\mathcal{U}_3^{hf}({\bf X}_e)$ with ${\bf X}_e \in \mathbb{R}^{N \times 2 \times 21 \times 21 \times 21} $, and batch size $N \in [1,100]$. \textbf{Right:} Plot of the execution time, expressed in millisecond, associated respectively to the serial (non-batched) and batched preconditioner pass of $\mathcal{U}_3^{hf}$. }
    \label{fig:speed-up}
\end{figure}

\begin{table}[htbp]
\centering
\begin{tabular}{l|cccccccccc}
\toprule
\textbf{Batch size} ($N$) & 1 & 5 & 10 & 15 & 20 & 30 & 50 & 70 & 90 & 100\\
\midrule
\textbf{time non-batched} [ms] & 1.5 & 6.1 & 11.8 & 17.5 & 23.2 & 34.7 & 58.2 & 81.3 & 104.7 & 114.9 \\
\midrule
\textbf{time batched} [ms] & 1.5 & 2.6 & 4.1 & 5.5 & 6.6 & 9.7 & 13.5 & 18.2 & 22.9 & 25.5 \\
\midrule
\textbf{Speed-up} & 1.0 & 2.3 & 2.9 & 3.5 & 3.7 & 3.6 & 4.3 & 4.5 & 4.6 & 4.5 \\
\bottomrule
\end{tabular}
\caption{\small Execution times [ms] and speed-up of the forward pass through the neural preconditioner as a function of the batch size.}
\label{tab:batch-speedup_new}
\end{table}




\section{Solution of the 3D-1D coupled problem}
\label{sec:coupled}

\begin{table}[b!]
    \centering
    \setlength{\tabcolsep}{5pt}
    \renewcommand{\arraystretch}{1}

\begin{tabular}{ll|cccc|cccc} \hline
    \textbf{Precond.} & $N_h$ & \multicolumn{4}{c|}{$k_\Omega=\sigma_\Omega = 10^{-3}$} & \multicolumn{4}{c}{$k_\Omega=\sigma_\Omega = 10^{-2}$} \rule{0pt}{16pt} \vspace{5pt} \\ 
    \hline
    & & \multirow{2}{*}{\shortstack{\textbf{Iter}}} & \multirow{2}{*}{\textbf{Rate}}  & \multirow{2}{*}{\textbf{Time}} & \multirow{2}{*}{\textbf{Rate}} & \multirow{2}{*}{\shortstack{\textbf{Iter}}} & \multirow{2}{*}{\textbf{Rate}}  & \multirow{2}{*}{\textbf{Time}} & \multirow{2}{*}{\textbf{Rate}}\\
    &&&&&&&&&\\ 
    \hline 
    
    \multirow{3}{*}{\small\shortstack{\small Non prec. \\ \small 3D-1D problem}}
    & 9,261 & 1973.55* &   & \multirow{3}{*}{n.r.}&  & 1768.65* & & \multirow{3}{*}{n.r.}&  \\ 
    & 68,921 & 2000* & &  & &2000* \\
    & 531441 & 2000* &  & & & 2000*\\
    \hline

    \multirow{3}{*}{\small\shortstack{\small AMG(10) \\ \small monolithic}}
    & 9,261 & 1181.48* & & \multirow{3}{*}{n.r.} & & 1220.83* & & \multirow{3}{*}{n.r.}  &\\ 
    & 68,921 & 1366.02* & &  & & 1423.44* \\
    & 531441 & 1862.71* &  & & & 1918.29*\\
    \hline

    \multirow{3}{*}{\small\shortstack{\small Haznics-AMG(10)\\ \small 3D-1D problem}}
    & 9,261 & 38.71 & \_  & 1.47& \_ & 35.36 & \_ & 1.49 & \_\\ 
    & 68,921 & 48.04 & 0.11 & 2.14 & 0.19 &41.39 & 0.08 & 2.26& 0.21\\
    & 531441 & 725.44* & 1.35 & 84.232 & 1.80& 497.57* & 1.22 & 64.95 & 1.64\\
    \hline

    \multirow{3}{*}{\small\shortstack{\small 1D: ILU \\ \small 3D: P-FGMRES \\ \small  P = none}}  
    & 9,261 & 153.82 & \_  &0.50 & \_ & 123.25 & \_ & 0.49 & \_\\ 
    & 68,921 & 417.82 & 0.50 &   8.51 &  1.42 & 316.88 & 0.47 & 4.48 & 1.10 \\
    & 531441 & 1564.84 & 0.65 &   260.41 &  1.32 & 916.19 & 0.52 & 152.88 & 1.73\\
    \hline

    \multirow{3}{*}{\small\shortstack{1D: ILU \\ 3D: P-FGMRES \\  P = $\mathcal{U}_3^{hf}(21^3)$}}
    & 9,261 & 60.74 & \_ & 0.47 & \_ & 22.28 & \_ & 0.27 & \_ \\  
    &68,921 & 123.30 & 0.35 &  3.12 &  0.94 & 54.33 & 0.44 & 1.54 & 0.87\\
    &531441 & 438.69 & 0.62 &  93.49 & 1.66 & 157.69 & 0.52 & 32.98 & 1.50 \\
    \hline

    \multirow{3}{*}{\small\shortstack{1D: ILU \\ 3D: P-FGMRES \\  P = $\mathcal{U}_3^{hf}(41^3)$}}
    & 9,261 & 69.15 & \_ & 0.51 & \_  & 26.95 & \_ & 0.29 & \_\\  
    &68,921 & 104.80 & 0.21 & 2.68  & 0.83 & 36.55 & 0.15 & 1.06 & 0.64\\
    &531441 & 257.91 & 0.44 &  54.62  & 1.48 & 59.99 & 0.23 & 12.78 & 1.22\\
    \hline

    \multirow{3}{*}{\small\shortstack{1D: ILU \\ 3D: P-FGMRES \\  P = AMG(10)}}
    &9,261 & 50.19 & \_   & 0.72 & \_ & 16.85 & \_ & 0.35 & \_\\ 
    &68,921 & 52.57 & 0.02   & 5.40 & 1.00 & 17.84 & 0.002 & 1.95 & 0.61  \\
    &531441 & 56.13 & 0.03  & 49.17 &  1.08 & 18.85 & 0.02 & 17.49 & 0.81 \\   
    \hline

    \multirow{3}{*}{\small\shortstack{1D: ILU \\ 3D: P-FGMRES \\  P = AMG(3)}}
    &9,261 & 51.53 & \_   & 0.63 & \_ & 18.23 & \_ & 0.33 & \_\\ 
    &68,921 & 56.61 & 0.05   & 4.64  & 0.99 & 23.00 & 0.12 & 2.01 & 0.90\\
    &531441 & 68.64 & 0.08 &  45.58  & 1.12 & 37.1 & 0.23 & 24.75 & 1.23\\   
    \hline

    \multirow{3}{*}{\small\shortstack{1D: ILU \\ 3D: P-FGMRES \\  P = ILU}}
    & 9,261 & 52.14 & \_ &  0.4 & \_  & 18.72 & \_ & 0.26 & \_\\ 
    & 68,921 & 58.08 & 0.05   & 1.97 & 0.79 & 26.52 & 0.17 & 1.05 & 0.70 \\
    & 531441 & 76.33 & 0.13   & 20.32 &  1.14 & 47.83 & 0.29 & 13.25 & 1.24\\   
    \hline
\end{tabular}
    
    \caption{\small \small Mean FGMRES iterations and execution times with associated rates for the preconditioning strategies considered, in different mesh configurations. Times are reported in seconds. Rates concerning iteration counts and execution times are computed with respect to $N_h$. (*) Denotes that the maximum number of iterations set to 2000 was exceeded. (n.r.) means that the data was not recorded.\\\\}
    \label{tab:iteration_and_time_FULL}
\end{table}

We now focus on the fully coupled 3D-1D problem, assessing how well the suggested block-preconditioning strategy \eqref{eq:right-prec full problem}-\eqref{eq:twosteps} performs when applied to the linear system \eqref{eq:mainsys}. We use a right-preconditioned FGMRES solver with a restart parameter $k=20$ and a stopping criterion $\|r\|/\|r_0\| \leq 1\cdot 10^{-15}$. This iterative solver is preconditioned by an approximation $\tilde{Q}_h^\mu$ of the block preconditioner defined in \eqref{eq:twosteps}. More precisely, the block preconditioner $\tilde{Q}_h^\mu$ is defined by an approximate solution to the 1D problem, obtained with ILU factorization, returning $\tilde{\zz}_{h,1}^\mu$ (a direct solver has also been tested, leading to no remarkable differences); then, 3 steps of the preconditioned FGMRES algorithm (precisely P-FGMRES with various preconditioners P) return $\tilde{\zz}_{h,0}^\mu$, an approximate solution of the 3D system. 

The performance of this approach for the 3D-1D coupled problem \eqref{eq:mainsys} with a comparison of monolithic solvers and different choices of preconditioners for the 3D problem can be found in Table~\ref{tab:iteration_and_time_FULL}. The latter presents a comparative analysis of the different strategies employed to solve the coupled 3D-1D system using the FGMRES iterative solver, with a focus on the impact of preconditioning. 
\newpage
As seen in the table, the results clearly highlight the importance of preconditioning for the efficient resolution of mixed-dimensional PDEs. Indeed, the first key observation is that the absence of a preconditioner results in a solver failing to converge within the budget of 2000 iterations. This emphasizes the severity of the ill-conditioning that characterizes the 3D-1D coupled problem and establishes the necessity of a robust preconditioning strategy.
The block-diagonal preconditioner $\tilde{Q}_h^\mu$ emerges as a good option, underlining the effectiveness of a decoupled block-structured approach that aligns with the intrinsic structure of the mixed-dimensional system. Interestingly, the full monolithic preconditioning strategy based on algebraic multigrid with 10 smoothing steps (denoted AMG(10) monolithic) fails to deliver competitive performance. This suggests that generic monolithic approaches are suboptimal for problems with strong 3D-1D coupling, as they are not designed to exploit the specific features of the system.
In contrast, the Haznics preconditioner, which involves a combination of AMG in the entire 3D domain and Schwarz smoother on the interface between the 3D and 1D domain \cite{10.1145/3625561}, shows the best performance in terms of iterations,  validating its role as the current state-of-the-art method for mixed-dimensional problems. A degradation of the mean iteration count can be observed for the highest value of $N_h$, especially due to lack of convergence when very complex graph geometries are considered.  

The table also reports the results for strategies that combine the block-diagonal preconditioner $\tilde{Q}_h^\mu$ with various solvers applied to its diagonal blocks. In particular, the first block, corresponding to the 3D sub-problem, is preconditioned with ILU, AMG (3), AMG (10) or different instances of the neural preconditioner, namely $\mathcal{U}^{\text{hf}}_3(21^3)$ and $\mathcal{U}^{\text{hf}}_3(41^3)$.
Although the iteration counts vary distinctively between the different strategies, we notice that their overall computational times are remarkably similar. This is a result of a trade-off between the cost per iteration and the number of iterations required. For instance, classical preconditioners like AMG generally result in lower iteration counts but at a higher per-iteration cost. Neural preconditioners, on the other hand, often yield higher iteration counts but with significantly lower cost per iteration, because of their efficient matrix-free representation. This balance ensures that all these strategies achieve similar total run-times, with performance comparable to or superior to monolithic alternatives.

These findings support the potential of the neural preconditioners as a viable and efficient strategy for solving complex mixed-dimensional problems of small-medium scale. The flexibility in trading iteration count for speed suggests that these learned preconditioners can serve as lightweight generalizable components in the broader preconditioning pipeline.

\section{Conclusions and Future Perspectives}

{\color{black}
In this work, we have developed a novel unsupervised neural network-based preconditioning strategy tailored to efficiently solve mixed-dimensional PDEs, specifically targeting the computational challenges arising from the coupling between 3D domains and embedded 1D structures. Using operator learning through convolutional neural networks, our approach successfully generalizes across various topological configurations of the 1D graphs and adapts robustly to different mesh resolutions without requiring retraining.

The neural preconditioner was extensively validated against classical preconditioning methods, including incomplete LU factorization and algebraic multigrid methods. Numerical experiments demonstrated that the proposed neural network preconditioner significantly accelerates convergence of iterative solvers, such as the Flexible GMRES, effectively handling both low- and high-frequency error components. Among the data augmentation strategies investigated, enriching the training set with random high-frequency vectors emerged as the most effective, 
enabling the neural network to overcome the inherent spectral bias common to most machine learning-based approaches.

Furthermore, our analysis underscored the critical role of including explicit parametric information through the distance function associated with the 1D graph, which improved the effectiveness of the preconditioner in varying geometric configurations. We also explored the complementary effects of pre- and post-smoothing strategies, illustrating their capacity to enhance preconditioning performance, particularly when the training set lacks high-frequency data.

However, there are limitations and areas for future improvement. A significant current limitation is that the proposed approach is applicable only to tensor-product domains, which restricts its use in realistic applications. To address this, we are actively exploring the use of graph- and mesh-informed neural networks \cite{gnns, FrancoMINN}, as suitable alternatives to convolutional layers, with the aim of extending the applicability of this method to more general computational domains. Additionally, developing a solid theoretical framework for selecting data augmentation subsets and, more broadly, for learning-based preconditioning methods is crucial to consolidate this approach and broaden the range of application areas. Although the scalability of the neural preconditioner shows promise, more work is required to enhance its performance in large-scale problems. 

The promising results obtained from this study not only establish neural preconditioning as a viable and efficient alternative to traditional preconditioners for mixed-dimensional PDEs, but also lay a robust foundation for further exploration into broader classes of coupled multiphysics problems. 
}

\section*{Acknowledgments}
{\small The authors thank Prof. Kent-Andre Mardal and Dr. Miroslav Kuchta of the Simula Research Laboratory for useful discussions.
PZ acknowledges the support of the MUR PRIN 2022 grant No. 2022WKWZA8 \emph{Immersed methods for multiscale and multiphysics problems} (IMMEDIATE) part of the Next Generation EU program Mission 4, Comp. 2, CUP D53D23006010006. NF has received funding under the project \textit{Reduced Order Modeling and Deep Learning for the real-time approximation of PDEs (DREAM)}, grant no. FIS00003154, funded by the Italian Science Fund (FIS) and by Ministero dell'Università e della Ricerca (MUR). The present research is part of the activities of the \emph{Dipartimento di Eccellenza} 2023-2027, Department of Mathematics, Politecnico di Milano. All authors are members of the Gruppo Nazionale per il Calcolo Scientifico (GNCS), Istituto Nazionale di Alta Matematica (INdAM).}

\bibliographystyle{plain} 
\bibliography{references}





\appendix 
\section{Auxiliary results}
\label{sec:appendix}
\renewcommand{\thesection}{\Alph{section}}

\begin{lemmas}
    \label{lemma:inversion}
    Let $(V,\|\cdot\|_{V})$ and $(W,\|\cdot\|_{W})$ be two Banach spaces. Let $\invertible(V,W)\subset\linearmaps(V,W)$ be the set of invertible bounded linear operators from $V$ to $W$, equipped with the operator norm
    $$\|A\|_{\linearmaps(V,W)}:=\sup_{v\in V\setminus\{0\}}\displaystyle\frac{\|Av\|_{W}}{\|v\|_{V}}.$$
    Then,
    \begin{itemize}
        \item [1.] $\invertible(V,W)$ is open in the operator norm topology;
        \item [2.] for every $A\in\invertible(V,W)$ there exists a unique inverse $A^{-1}\in\linearmaps(W,V).$  Furthermore, $A^{-1}\in\invertible(W,V);$
        \item [3.] the inversion map $\mathscr{I}:A\mapsto A^{-1}$ is $\invertible(V,W)\mapsto \invertible(W,V)$ continuous.
    \end{itemize}
\end{lemmas}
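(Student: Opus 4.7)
The plan is to prove the three statements in order, using the Neumann series as the main analytical tool throughout.

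For part 2, uniqueness of the inverse is a purely algebraic observation: if $B_{1},B_{2}\in\linearmaps(W,V)$ both satisfy $B_{i}A=\identity_{V}$ and $AB_{i}=\identity_{W}$, then $B_{1}=B_{1}AB_{2}=B_{2}$. The membership $A^{-1}\in\invertible(W,V)$ is then immediate from the paper's definition of $\invertible$, since $A\in\linearmaps(V,W)$ itself plays the role of the inverse of $A^{-1}$. No analytic content is needed here beyond what is built into the statement.

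For part 1, I would fix $A\in\invertible(V,W)$ and show that every $B\in\linearmaps(V,W)$ with $\|A-B\|_{\linearmaps(V,W)}<1/\|A^{-1}\|_{\linearmaps(W,V)}$ also lies in $\invertible(V,W)$. The key factorization is
\[
B = A\bigl(\identity_{V}-A^{-1}(A-B)\bigr).
\]
Setting $T:=A^{-1}(A-B)\in\linearmaps(V,V)$, one has $\|T\|\le\|A^{-1}\|\,\|A-B\|<1$, so the Neumann series $S:=\sum_{k\ge 0}T^{k}$ converges absolutely in $\linearmaps(V,V)$ and satisfies $(\identity_{V}-T)S=S(\identity_{V}-T)=\identity_{V}$. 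Consequently $\identity_{V}-T$ is invertible, and hence so is $B$, with the explicit formula $B^{-1}=SA^{-1}$ and the quantitative bound
\[
\|B^{-1}\|_{\linearmaps(W,V)}\le\frac{\|A^{-1}\|_{\linearmaps(W,V)}}{1-\|A^{-1}\|_{\linearmaps(W,V)}\,\|A-B\|_{\linearmaps(V,W)}}.
\]

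For part 3, I would exploit the standard resolvent-type identity
\[
B^{-1}-A^{-1} = B^{-1}(A-B)A^{-1},
\]
valid whenever both $A$ and $B$ are invertible, which yields
\[
\|B^{-1}-A^{-1}\|_{\linearmaps(W,V)}\le\|B^{-1}\|_{\linearmaps(W,V)}\,\|A-B\|_{\linearmaps(V,W)}\,\|A^{-1}\|_{\linearmaps(W,V)}.
\]
Combining this with the uniform estimate on $\|B^{-1}\|$ obtained in part 1 gives $\|\mathscr{I}(B)-\mathscr{I}(A)\|_{\linearmaps(W,V)}\to 0$ as $\|B-A\|_{\linearmaps(V,W)}\to 0$, which is exactly the continuity of $\mathscr{I}$ at $A$.

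The main obstacle is not conceptual but organizational: the argument in part 1 must be set up carefully so that it produces, simultaneously, the openness of $\invertible(V,W)$ and the quantitative control $\|B^{-1}\|\le\|A^{-1}\|/(1-\|A^{-1}\|\|A-B\|)$ that part 3 requires. Once the Neumann-series step is phrased this way, part 3 reduces to a one-line application of the resolvent identity, and no further delicate estimates are needed.
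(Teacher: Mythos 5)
Your proof is correct and relies on the same core tool---the Neumann series---as the paper's own argument. Parts 1 and 2 are effectively identical: your $T=A^{-1}(A-B)$ is exactly the paper's $C$, and your formula $B^{-1}=\bigl(\sum_{k\ge 0}T^{k}\bigr)A^{-1}$ is the same operator as the paper's $B^{\dagger}=(A^{-1}B)^{-1}A^{-1}$, since $\identity_{V}-T=A^{-1}B$. The only genuine difference is in part 3. You deduce continuity from the resolvent identity
\[
B^{-1}-A^{-1}=B^{-1}(A-B)A^{-1},
\]
combined with the uniform bound $\|B^{-1}\|_{\linearmaps(W,V)}\le\|A^{-1}\|_{\linearmaps(W,V)}/\bigl(1-\|A^{-1}\|_{\linearmaps(W,V)}\|A-B\|_{\linearmaps(V,W)}\bigr)$ extracted from part 1. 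The paper instead re-expands $\identity_{V}-A_{n}^{-1}A$ as a Neumann series in $A^{-1}E_{n}$ and bounds its tail directly. Your route is tighter, avoids recomputing the series, and makes the logical dependence on part 1 explicit; it also sidesteps a small slip in the paper's part 3, where the threshold on $\|E_{n}\|_{\linearmaps(V,W)}$ is written as $1/\|A\|_{\linearmaps(V,W)}$ but should be $1/\|A^{-1}\|_{\linearmaps(W,V)}$ for the claimed Neumann series to converge. The one organizational caveat---already flagged in your plan---is that part 1 must be stated so as to deliver the quantitative bound on $\|B^{-1}\|$ and not merely the qualitative openness, since part 3 consumes that bound; your write-up does record it, so the argument is complete.
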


\begin{proof} We mention that a stronger result holds, as $\mathscr{I}$ is in fact holomorphic: see, e.g., the proof in \cite[Proposition 4.8]{adcock2022sparse}. Nonetheless, we can easily prove the statements in the Lemma as follows.
\;\\
    \begin{itemize}
        \item [1.] Given $A\in\invertible(V,W)$, let $B\in\linearmaps(V,W)$ be such that $\|A-B\|_{\linearmaps(V,W)}<1/\|A^{-1}\|_{\linearmaps(W,V)}.$
        We notice that the operator $C=A^{-1}(A-B)=\identity_{V}-A^{-1}B$ is an endomorphism from $V$ onto itself. Furthermore,
        $$\|C\|_{\linearmaps(V,V)}\le \|A^{-1}\|_{\linearmaps(W,V)}\cdot\|A-B\|_{\linearmaps(V,W)}<1.$$
        It follows that, for every $k$, the $k$th power of $C$, defined via composition, satisfies $\|C^{k}\|_{\linearmaps(V,V)}\le \|C\|_{\linearmaps(V,V)}^{k}<1.$
        As a consequence, the Nuemann series
        $\sum_{k=0}^{+\infty}C^{k}$
        is shown to be absolutely convergent in $\linearmaps(V,V)$. Indeed,
        $$\sum_{k=0}^{+\infty}\|C^{k}\|_{\linearmaps(V,V)}\le
        \sum_{k=0}^{+\infty}\|C\|_{\linearmaps(V,V)}^{k} = \frac{1}{1-\|C\|_{\linearmaps(V,V)}}.$$
        Since $\linearmaps(V,V)$ is a Banach space, this ensures that $\sum_{k=0}^{+\infty}C^{k}$ converges to some element of $\linearmaps(V,V)$. In particular, by construction of the Neumann series,
        $\sum_{k=0}^{+\infty}C^{k}=(\identity_{V}-C)^{-1}=(A^{-1}B)^{-1}.$ Let
        \begin{equation}
        \label{eq:binverse}
        B^{\dagger}:=(A^{-1}B)^{-1}A^{-1}.
        \end{equation}
        Clearly, $B^{\dagger}\in\linearmaps(W,V)$ by composition. Furthermore, we have
        $A^{-1}BB^{\dagger}=A^{-1}.$
        Since $A^{-1}$ is invertible, and thus of full rank, this shows that $BB^{\dagger}=\identity_{W}$. Similarly, composing \eqref{eq:binverse} with $B$ yields
        $$B^{\dagger}B=(A^{-1}B)(A^{-1}B)=\identity_{V}.$$
        It follows that $B\in\invertible(V,W)$ and $B^{\dagger}=B^{-1}.$\\

        \item[2.] This follows immediately from the fact that invertible operators have full rank. To see this, let $A\in \invertible(V,W)$ and assume $B_{1},B_{2}\in\linearmaps(W,V)$ are two -possibly different- inverses of $A$. For all $w\in W$ we have
        $$B_{1}w = B_{2}A(B_{1}w) = B_{2}(AB_1w)=B_2w$$
        since $B_1w\in V$ and $AB_1w\in W$. Then $B_1\equiv B_2=:A^{-1}.$ The fact that $A^{-1}\in\invertible(W,V)$ is obvious.\\

        \item[3.] Let $A_{n}\to A$ in $\invertible(V,W)$. Define the error $E_{n}=A-A_{n}.$ There exists some $n_{0}$ such that
        $\|E_{n}\|_{\linearmaps(V,W)}<1/\|A\|_{\linearmaps(V,W)}$
        for all $n\ge n_0$. As for our proof of (1), we notice that this ensures the convergence of the following Neumann series
        $$\sum_{k=0}^{+\infty}(A^{-1}E_{n})^{k}=(\identity_V-A^{-1}E_{n})^{-1}.$$
        whenever $n\ge n_0$. Since
        $$
        \|A^{-1}-A_{n}^{-1}\|_{\linearmaps(W,V)}\le\|\identity_{V}-A_{n}^{-1}A\|_{\linearmaps(V,V)}\cdot\|A^{-1}\|_{\linearmaps(W,V)},$$
        it suffices to prove that $\|\identity_{V}-A_{n}^{-1}A\|_{\linearmaps(V,V)}\to0$ as $A_{n}\to A$. To this end, we note that for all $n\ge n_0$
        \begin{align*}
         \|\identity_{V}-A_n^{-1}A\|_{\linearmaps(V,V)}
        &=
        \|\identity_{V}-(A-E_{n})^{-1}A\|_{\linearmaps(V,V)}
        \\ &=\|\identity_{V}-(\identity_{V}-A^{-1}E_{n})^{-1}A^{-1}A\|_{\linearmaps(V,V)}\\&=
        \|\identity_{V}-(\identity_{V}-A^{-1}E_{n})^{-1}\|_{\linearmaps(V,V)}.
        \end{align*}
        \noindent In particular, $\|\identity_{V}-A_n^{-1}A\|_{\linearmaps(V,V)}=\left\|\sum_{k=1}^{+\infty}(A^{-1}E_{n})^{k}\right\|_{\linearmaps(V,V)}$, and thus
        $$
            \|\identity_{V}-A_n^{-1}A\|_{\linearmaps(V,V)} \le\|A^{-1}\|_{\linearmaps(W,V)}\sum_{k=1}^{+\infty}\|E_{n}\|^{k}_{\linearmaps(V,W)}=\frac{\|A^{-1}\|_{\linearmaps(W,V)}\|E_{n}\|_{\linearmaps(V,W)}}{1-\|E_{n}\|_{\linearmaps(V,W)}},
        $$
        which is vanishing for $n\to +\infty$, as $E_{n}\to0.$
    \end{itemize}
\end{proof}


\end{document}